\documentclass[british,a4paper]{scrartcl}
\usepackage[T1]{fontenc}
\usepackage[latin9]{inputenc}
\usepackage[a4paper]{geometry}
\geometry{verbose,lmargin=2cm,rmargin=2cm}
\setlength{\parskip}{\smallskipamount}
\setlength{\parindent}{0pt}
\usepackage{babel}
\usepackage{prettyref}
\usepackage{mathtools}
\usepackage{amsmath}
\usepackage{amsthm}
\usepackage{amssymb}
\usepackage[unicode=true,pdfusetitle,
 bookmarks=true,bookmarksnumbered=false,bookmarksopen=false,
 breaklinks=false,pdfborder={0 0 1},backref=false,colorlinks=false]
 {hyperref}

\makeatletter
\theoremstyle{plain}
\newtheorem{thm}{\protect\theoremname}[section]
\theoremstyle{plain}
\newtheorem{lem}[thm]{\protect\lemmaname}
\theoremstyle{definition}
\newtheorem*{defn*}{\protect\definitionname}
\theoremstyle{remark}
\newtheorem{rem}[thm]{\protect\remarkname}
\theoremstyle{plain}
\newtheorem{prop}[thm]{\protect\propositionname}
\theoremstyle{plain}
\newtheorem{cor}[thm]{\protect\corollaryname}

\@ifundefined{date}{}{\date{}}
\usepackage{prettyref}

\usepackage{enumerate}
\allowdisplaybreaks

\newcommand*{\e}{\mathrm{e}}
\renewcommand*{\i}{\mathrm{i}}

\newcommand{\R}{\mathbb{R}}
\newcommand{\N}{\mathbb{N}}

\newcommand{\dom}{\operatorname{dom}}
\newcommand{\ran}{\operatorname{ran}}

\renewcommand{\d}{\,\mathrm{d}}
\renewcommand{\Re}{\operatorname{Re}}

\newcommand{\grad}{\operatorname{grad}}

\newcommand{\curl}{\operatorname{curl}}
\newcommand{\dive}{\operatorname{div}}

\newrefformat{subsec}{Subsection \ref{#1}}
\newrefformat{prob}{Problem \ref{#1}}
\newrefformat{prop}{Proposition \ref{#1}}
\newrefformat{lem}{Lemma \ref{#1}}
\newrefformat{thm}{Theorem \ref{#1}}
\newrefformat{cor}{Corollary \ref{#1}}
\newrefformat{rem}{Remark \ref{#1}}
\newrefformat{exa}{Example \ref{#1}}
\newrefformat{sub}{Subsection \ref{#1}}
\newrefformat{eq}{(\ref{#1})}

\theoremstyle{definition}

\newrefformat{hyp}{Hypotheses \ref{#1}}

\makeatother

\providecommand{\corollaryname}{Corollary}
\providecommand{\definitionname}{Definition}
\providecommand{\lemmaname}{Lemma}
\providecommand{\propositionname}{Proposition}
\providecommand{\remarkname}{Remark}
\providecommand{\theoremname}{Theorem}

\begin{document}
\title{M-Accretive Realisations of Skew-Symmetric Operators}
\author{Rainer Picard\thanks{Institut für Analysis, TU Dresden, Dresden, Germany, rainer.picard@tu-dresden.de}
and Sascha Trostorff\thanks{Mathematisches Seminar, CAU Kiel, Kiel, Germany, trostorff@math.uni-kiel.de}}
\maketitle
\begin{abstract}
\textbf{Abstract. }We consider skew-symmetric operators $A_{0}$ on
a Hilbert space $H$ and characterise all (nonlinear) m-accretive
restrictions of $A\coloneqq-A_{0}^{\ast}$ in terms of the `deficiency
spaces' $\ker(1\pm A)$. The results are illustrated by several examples
and applied to a partial differential equation with an impedance type
boundary condition. 
\end{abstract}

\section{Introduction}

In mathematical physics, the realisation of certain -- frequently
differential -- operators play an essential role. For instance, in
quantum mechanics, selfadjoint realisations of symmetric operators
are of key interest while for well-posedness of partial differential
equations, different realisations like selfadjoint, skew-selfajoint
and accretive ones are a common subject of study. For all these cases,
different frameworks and techniques have been developed. 

The probably most prominent technique to characterise all selfadjoint
extensions of symmetric operators $S$ goes back to von Neumann, \cite{vonNeumann},
and involves the so-called deficiency spaces $\ker(\i\pm S^{\ast})$
and the Cayley transform associated with $S$ (see e.g. \cite[Chapter 4]{Weidmann1980},
\cite[Chapter 13.2]{Schmudgen2021}). Other techniques involving forms
(see e.g. \cite{SSVW2015}) or boundary triplets (\cite[Chapter 3]{Gorbachuk1991},
\cite[Chapter 14]{Schmudgen2021}) have been used to study selfadjoint,
skew-selfadjoint and accretive realisations. Note that the framework
of boundary triplets and the generalised concept of boundary relations
has been successfully used in spectral theory for linear operators
and more generally linear relations (see e.g. \cite{Derkach2012,Behrndt2020}
and the references therein). 

We emphasise that in all the references above, the realisations are
assumed to be linear. So far, the topic of nonlinear realisations
seems not to have been addressed extensively in the literature. We
are only aware of a paper by Posilicano \cite{Posilicano}, where
m-accretive nonlinear extensions of accretive symmetric operators
are considered and by one of the authors in \cite{Trostorff2014},
where nonlinear m-accretive realisations of skew-symmetric block operator
matrices are considered. We also mention \cite{Augner}, where m-accretive
realisations were used to study port-Hamiltonian systems with nonlinear
boundary feedback. 

In the present paper, we provide a characterisation of all possible
nonlinear m-accretive restrictions of an operator $A\coloneqq-A_{0}^{\ast}$,
where $A_{0}$ is a skew-symmetric operator on some Hilbert space.
The characterisation follows the spirit of the von Neumann approach
for symmetric operators mentioned above, by considering the spaces
$\ker(1\pm A)$ and contractions between them. 

The motivation to study such realisations lies in the study of evolution
equations, where m-accretivity (or equivalently m-dissipativity) is
a key assumption for differential operators to ensure well-posedness.
Indeed, in the language of $C_{0}$-semigroups, m-accretive operators
correspond to generators of contractive semigroups by the famous theorem
of Lumer-Phillips in the linear case (see e.g. \cite[Theorem 3.15]{Engel_Nagel})
and by the work of Komura in the nonlinear case, \cite{Komura} (also
note the extension to Banach spaces by Crandall and Pazy, \cite{Crandall}).
Moreover, m-accretive operators play a crucial role in the theory
of evolutionary equations or inclusions, see e.g. \cite{Picard2009},
\cite[Chapter 7]{Picard_McGhee}, \cite{Trostorff2020} and \cite{STW2022}. 

The article is structured as follows. In Section 2 we collect some
preliminary results on skew-symmetric operators and m-accretive relations.
These results are used to prove the main result in Section 3 (\prettyref{thm:main})
providing a characterisation of all m-accretive realisations of $A\coloneqq-A_{0}^{\ast}\supseteq A_{0}$
for some skew-symmetric operator $A_{0}.$ Section 4 is devoted to
two examples. Firstly, we discuss the derivative on an interval and
use our abstract characterisation result to describe all m-accretive
realisations of the derivative in terms of boundary conditions. Secondly,
we apply the characterisation result to block operator matrices, which
naturally occur in many equations of mathematical physics. The characterisation
is done in terms of abstract boundary data spaces, which turn out
to be equivalent to classical trace spaces in case of gradient, divergence
or rotation on smooth domains. We hereby provide another and simpler
proof of the main result of \cite{Trostorff2014}. The article is
concluded by a concrete example illustrating the applicability of
the abstract result to differential equations with impedance type
boundary conditions. 

Throughout, all Hilbert spaces are without loss of generality\footnote{Note that a complex Hilbert space $H$ with inner product $\langle\cdot,\cdot\rangle_{H}$
can also be viewed as a real Hilbert space by restricting scalar multipliers
to $\mathbb{R}$ and using $\langle\cdot,\cdot\rangle\coloneqq\Re\langle\cdot,\cdot\rangle$
as inner product without affecting the topology of $H$.} assumed to be real and their inner product is denoted by $\langle\cdot,\cdot\rangle$
with suitable indeces. 

\section{Preliminaries}

Throughout, let $H$ be a Hilbert space and $A_{0}:\dom(A_{0})\subseteq H\to H$
be a skew-symmetric closed linear operator and set $A\coloneqq-A_{0}^{\ast}\supseteq A_{0}.$
For a closed operator $B:\dom(B)\subseteq X\to Y$ between to Hilbert
spaces $X$ and $Y$ we equip $\dom(B)$ with the graph inner product.
In that way, $\dom(B)$ becomes a Hilbert space itself. Since $A_{0}\subseteq A$
we clearly have $\dom(A_{0})\subseteq\dom(A)$ and $\dom(A_{0})$
is a closed subspace of $\dom(A).$ We first compute the orthogonal
complement of this subspace.
\begin{lem}
\label{lem:decomp_domain}We have
\[
\dom(A)=\dom(A_{0})\oplus_{A}\ker(1-A^{2})=\dom(A_{0})\oplus_{A}\ker(1-A)\oplus_{A}\ker(1+A),
\]
where the orthogonal sum is taken with respect to the graph inner
product in $\dom(A)$.
\end{lem}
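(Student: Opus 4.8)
The plan is to compute the orthogonal complement of $\dom(A_{0})$ inside the Hilbert space $\dom(A)$ (equipped with the graph inner product $\langle x,y\rangle_{A}=\langle x,y\rangle+\langle Ax,Ay\rangle$), to identify this complement with $\ker(1-A^{2})$, and then to split that kernel into the two eigenspaces $\ker(1\mp A)$ by means of the natural eigenprojections. Since $\dom(A_{0})$ is a closed subspace of $\dom(A)$ (as noted in the excerpt), the projection theorem yields $\dom(A)=\dom(A_{0})\oplus_{A}\dom(A_{0})^{\perp_{A}}$, so the first asserted identity reduces to $\dom(A_{0})^{\perp_{A}}=\ker(1-A^{2})$, and the second follows by refining the last summand.

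For the first step I would fix $u\in\dom(A)$ and write out the orthogonality condition: $u\perp_{A}\dom(A_{0})$ means $\langle u,v\rangle+\langle Au,A_{0}v\rangle=0$ for all $v\in\dom(A_{0})$, using $Av=A_{0}v$ there. Rearranged, this reads $\langle Au,A_{0}v\rangle=\langle -u,v\rangle$ for every $v\in\dom(A_{0})$. The key observation is that this identity is precisely the statement that the functional $v\mapsto\langle Au,A_{0}v\rangle$ is $H$-bounded on $\dom(A_{0})$, so by the very definition of the adjoint we get $Au\in\dom(A_{0}^{\ast})$ with $A_{0}^{\ast}(Au)=-u$. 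Since $A=-A_{0}^{\ast}$, i.e. $A_{0}^{\ast}=-A$ and $\dom(A_{0}^{\ast})=\dom(A)$, this says exactly that $Au\in\dom(A)$ (hence $u\in\dom(A^{2})$) and $A^{2}u=u$, that is $u\in\ker(1-A^{2})$. The converse is a direct computation: if $u\in\ker(1-A^{2})$ then $Au\in\dom(A)=\dom(A_{0}^{\ast})$, whence $\langle Au,A_{0}v\rangle=\langle A_{0}^{\ast}Au,v\rangle=\langle -A^{2}u,v\rangle=\langle -u,v\rangle$, which makes $\langle u,v\rangle_{A}$ vanish. This establishes $\dom(A_{0})^{\perp_{A}}=\ker(1-A^{2})$ and hence the first decomposition.

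For the second step I would exploit the factorisation $1-A^{2}=(1-A)(1+A)$ together with the eigenprojections $P_{\pm}u=\tfrac{1}{2}(u\pm Au)$, defined for $u\in\ker(1-A^{2})$. A short calculation using $A^{2}u=u$ gives $A(P_{+}u)=P_{+}u$ and $A(P_{-}u)=-P_{-}u$, so $P_{+}u\in\ker(1-A)$ and $P_{-}u\in\ker(1+A)$, while $P_{+}u+P_{-}u=u$; thus $\ker(1-A^{2})=\ker(1-A)+\ker(1+A)$. Orthogonality in the graph inner product is immediate: for $x\in\ker(1-A)$ and $y\in\ker(1+A)$ we have $Ax=x$ and $Ay=-y$, so $\langle x,y\rangle_{A}=\langle x,y\rangle+\langle Ax,Ay\rangle=\langle x,y\rangle-\langle x,y\rangle=0$, which also forces the intersection to be trivial and the sum to be direct. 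Combining the two steps yields the full decomposition.

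I expect the main obstacle to be the first step, and specifically the correct handling of the domain of the adjoint: one must recognise that the orthogonality relation \emph{itself} certifies $Au\in\dom(A_{0}^{\ast})$, rather than assuming a priori that $Au$ lies in a domain on which $A_{0}^{\ast}$ may act. Once this point is settled, everything else — the eigenprojection computation and the orthogonality of the two eigenspaces — is routine linear algebra with the graph inner product.
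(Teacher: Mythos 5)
Your proposal is correct and follows essentially the same route as the paper: the orthogonality relation in the graph inner product is read as the statement that $Au\in\dom(A_{0}^{\ast})$ with $A_{0}^{\ast}Au=-u$, identifying $\dom(A_{0})^{\perp_{A}}=\ker(1-A^{2})$, and the refinement into $\ker(1\mp A)$ uses exactly the paper's projections $\tfrac{1}{2}(1\pm A)u$ together with the same orthogonality computation. The only cosmetic difference is that the paper phrases the first step as a single chain of equivalences and records explicitly the (trivial) inclusion $\ker(1\pm A)\subseteq\ker(1-A^{2})$, which your write-up leaves implicit.
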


\begin{proof}
We start to compute the orthogonal complement of $\dom(A_{0})$ in
$\dom(A).$ We have
\begin{align*}
u\in\dom(A_{0})^{\bot_{\dom(A)}} & \Leftrightarrow\forall v\in\dom(A_{0}):\,\langle u,v\rangle_{\dom\left(A\right)}=0,\\
 & \Leftrightarrow\forall v\in\dom(A_{0}):\,\langle u,v\rangle_{H}+\langle Au,A_{0}v\rangle_{H}=0,\\
 & \Leftrightarrow Au\in\dom(A_{0}^{\ast}):\:A_{0}^{\ast}Au=-u,\\
 & \Leftrightarrow u\in\dom(A^{2}):\,u-A^{2}u=0.
\end{align*}
The latter shows $\dom(A_{0})^{\bot_{\dom\left(A\right)}}=\ker(1-A^{2})$
and hence, the first orthogonal decomposition follows. For the second
equality, we note that $\ker(1\pm A)\subseteq\ker(1-A^{2}):$ Indeed,
for $u\in\ker(1\pm A)$ we have $u\in\dom(A)$ and $u\pm Au=0.$ The
latter implies $Au=\mp u\in\dom(A)$ and $A^{2}u=\mp Au=u.$ Moreover,
the subspaces $\ker(1+A)$ and $\ker(1-A)$ are orthogonal in $\dom(A).$
Indeed, for $u\in\ker(1+A),\,v\in\ker(1-A)$ we have 
\[
\langle u,v\rangle_{\dom(A)}=\langle u,v\rangle_{H}+\langle Au,Av\rangle_{H}=\langle u,v\rangle_{H}+\langle u,-v\rangle_{H}=0.
\]
It remains to prove that each element $u\in\ker(1-A^{2})$ can be
decomposed in $u=v+w$ for some $v\in\ker(1-A),\,w\in\ker(1+A).$
We set $v\coloneqq\frac{1}{2}(1+A)u\in\dom(A)$ and $w\coloneqq\frac{1}{2}(1-A)u\in\dom(A).$
Then clearly $\left(1-A\right)v=\frac{1}{2}(1-A^{2})u=0$ and $(1+A)w=\frac{1}{2}(1-A^{2})u=0$
and $v+w=u$, which shows the desired decomposition. 
\end{proof}
\begin{defn*}
We define the orthogonal projections according to the decomposition
in \prettyref{lem:decomp_domain} by 
\begin{align*}
\pi_{0} & \colon\dom(A)\to\dom(A_{0}),\\
\pi_{1} & \colon\dom(A)\to\ker(1-A),\\
\pi_{-1} & \colon\dom(A)\to\ker(1+A),
\end{align*}
respectively. Similarly, we denote the canonical embeddings by $\iota_{0},\iota_{1}$
and $\iota_{-1}$, respectively.
\end{defn*}
\begin{rem}
Note that $\pi_{0}=\iota_{0}^{\ast},\,\pi_{1}=\iota_{1}^{\ast}$ and
$\pi_{-1}=\iota_{-1}^{\ast}$, where the adjoints are computed with
respect to the graph inner product of $A$, i.e. in the Hilbert space
$\dom(A).$ Moreover, for $u\in\dom(A)$ we have 
\[
Au=A\pi_{0}u+A\pi_{1}u+A\pi_{-1}u=A_{0}\pi_{0}u+\pi_{1}u-\pi_{-1}u.
\]
\end{rem}

\begin{lem}
\label{lem:inner_product_A}For $u\in\dom(A)$ we have 
\[
\langle Au,u\rangle_{H}=\|\pi_{1}u\|_{H}^{2}-\|\pi_{-1}u\|_{H}^{2}.
\]
\end{lem}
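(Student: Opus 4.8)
The plan is to start from the orthogonal decomposition recorded in the preceding remark: writing $u=\pi_{0}u+\pi_{1}u+\pi_{-1}u$ and $Au=A_{0}\pi_{0}u+\pi_{1}u-\pi_{-1}u$, I would expand $\langle Au,u\rangle_{H}$ bilinearly into nine terms and sort out which survive. To keep the bookkeeping light, set $p_{0}\coloneqq\pi_{0}u\in\dom(A_{0})$, $p_{1}\coloneqq\pi_{1}u\in\ker(1-A)$ and $p_{-1}\coloneqq\pi_{-1}u\in\ker(1+A)$, and recall $Ap_{0}=A_{0}p_{0}$, $Ap_{1}=p_{1}$ and $Ap_{-1}=-p_{-1}$.

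Among the nine contributions the three diagonal ones are immediate. The term $\langle A_{0}p_{0},p_{0}\rangle_{H}$ vanishes by skew-symmetry of $A_{0}$, since in a real Hilbert space $\langle A_{0}v,v\rangle_{H}=-\langle v,A_{0}v\rangle_{H}=-\langle A_{0}v,v\rangle_{H}$, while $\langle p_{1},p_{1}\rangle_{H}=\|\pi_{1}u\|_{H}^{2}$ and $-\langle p_{-1},p_{-1}\rangle_{H}=-\|\pi_{-1}u\|_{H}^{2}$ already produce the claimed right-hand side. Everything therefore hinges on showing that the six remaining mixed terms cancel.

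The hard part is precisely these cross terms, and the key idea is to feed in the graph-orthogonality of \prettyref{lem:decomp_domain} rather than to argue in $H$ directly. The two terms coupling $p_{1}$ and $p_{-1}$, namely $\langle p_{1},p_{-1}\rangle_{H}$ and $-\langle p_{-1},p_{1}\rangle_{H}$, cancel at once by symmetry of the real inner product. For the terms involving $p_{0}$ I would exploit that $\dom(A_{0})$ is orthogonal to $\ker(1\pm A)$ in the graph inner product: expanding $0=\langle p_{0},p_{1}\rangle_{\dom(A)}=\langle p_{0},p_{1}\rangle_{H}+\langle A_{0}p_{0},p_{1}\rangle_{H}$ (using $Ap_{1}=p_{1}$) gives $\langle A_{0}p_{0},p_{1}\rangle_{H}=-\langle p_{0},p_{1}\rangle_{H}$, so the pair $\langle A_{0}p_{0},p_{1}\rangle_{H}+\langle p_{1},p_{0}\rangle_{H}$ cancels. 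Similarly $0=\langle p_{0},p_{-1}\rangle_{\dom(A)}=\langle p_{0},p_{-1}\rangle_{H}-\langle A_{0}p_{0},p_{-1}\rangle_{H}$ (using $Ap_{-1}=-p_{-1}$) yields $\langle A_{0}p_{0},p_{-1}\rangle_{H}=\langle p_{0},p_{-1}\rangle_{H}$, which makes the pair $\langle A_{0}p_{0},p_{-1}\rangle_{H}-\langle p_{-1},p_{0}\rangle_{H}$ vanish as well. With all six cross terms eliminated, the identity follows. The only point demanding care is to track the signs $Ap_{\pm1}=\pm p_{\pm1}$ consistently when converting the graph inner products into $H$ inner products.
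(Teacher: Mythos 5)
Your proof is correct, and its skeleton coincides with the paper's: both expand $\langle Au,u\rangle_{H}$ using $u=\pi_{0}u+\pi_{1}u+\pi_{-1}u$ and $Au=A_{0}\pi_{0}u+\pi_{1}u-\pi_{-1}u$, kill the diagonal term $\langle A_{0}\pi_{0}u,\pi_{0}u\rangle_{H}$ by skew-symmetry, and cancel the $\pi_{1}$--$\pi_{-1}$ cross terms by symmetry of the real inner product. The one place where you diverge is the treatment of the mixed terms involving $\pi_{0}u$: the paper converts $\langle A_{0}\pi_{0}u,\pi_{\pm1}u\rangle_{H}$ via the adjointness relation $\langle A_{0}v,w\rangle_{H}=-\langle v,Aw\rangle_{H}$ (valid for $v\in\dom(A_{0})$, $w\in\dom(A)$ since $A=-A_{0}^{\ast}$), whereas you extract the same identities $\langle A_{0}\pi_{0}u,\pi_{\pm1}u\rangle_{H}=\mp\langle\pi_{0}u,\pi_{\pm1}u\rangle_{H}$ from the graph-orthogonality $\langle\pi_{0}u,\pi_{\pm1}u\rangle_{\dom(A)}=0$ of \prettyref{lem:decomp_domain} together with $A\pi_{\pm1}u=\pm\pi_{\pm1}u$. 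The two devices are equivalent in content---the graph-orthogonality in \prettyref{lem:decomp_domain} was itself established from the adjointness relation---so your version simply reuses the already-proved decomposition instead of invoking adjointness a second time; this is a sound and marginally more self-contained bookkeeping choice, and all the signs you need to track ($Ap_{\pm1}=\pm p_{\pm1}$) are handled correctly.
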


\begin{proof}
We compute 
\begin{align*}
\langle Au,u\rangle_{H} & =\langle A_{0}\pi_{0}u+\pi_{1}u-\pi_{-1}u,\pi_{0}u+\pi_{1}u+\pi_{-1}u\rangle_{H}\\
 & =\|\pi_{1}u\|_{H}^{2}-\|\pi_{-1}u\|_{H}^{2}+\langle A_{0}\pi_{0}u,\pi_{1}u+\pi_{-1}u\rangle_{H}+\\
 & \quad+\langle\pi_{1}u,\pi_{0}u\rangle_{H}-\langle\pi_{-1}u,\pi_{0}u\rangle_{H},
\end{align*}
where we have used $\langle A_{0}\pi_{0}u,\pi_{0}u\rangle_{H}=0$
due to the skew-symmetry of $A_{0}.$ Moreover, we compute 
\begin{align*}
\langle A_{0}\pi_{0}u,\pi_{1}u+\pi_{-1}u\rangle_{H} & =-\langle\pi_{0}u,A\pi_{1}u+A\pi_{-1}u\rangle_{H}\\
 & =-\langle\pi_{0}u,\pi_{1}u\rangle_{H}+\langle\pi_{0}u,\pi_{-1}u\rangle_{H}
\end{align*}
and thus, the assertion follows.
\end{proof}
The kernels $\ker(1-A)$ and $\ker(1+A)$ are also closed subspaces
of the Hilbert space $H$ and thus, there exist orthogonal projections
\begin{align*}
P_{\ker(1-A)} & :H\to\ker(1-A),\\
P_{\ker(1+A)} & :H\to\ker(1+A).
\end{align*}
The next lemma relates these projections with the projections $\pi_{1}$
and $\pi_{-1}.$ 
\begin{lem}
\label{lem:projections} Let $u\in\dom(A).$ Then 
\begin{align*}
\pi_{1}u & =\frac{1}{2}P_{\ker(1-A)}(1+A)u,\\
\pi_{-1}u & =\frac{1}{2}P_{\ker(1+A)}(1-A)u.
\end{align*}
\end{lem}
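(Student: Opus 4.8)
The plan is to start from the orthogonal decomposition of \prettyref{lem:decomp_domain}, writing any $u\in\dom(A)$ as $u=\pi_{0}u+\pi_{1}u+\pi_{-1}u$ with $\pi_{0}u\in\dom(A_{0})$, $\pi_{1}u\in\ker(1-A)$ and $\pi_{-1}u\in\ker(1+A)$. I would then apply the operator $1+A$ to this decomposition and use the fact recorded in the Remark that $A$ acts as $A_{0}$ on $\dom(A_{0})$, as $+1$ on $\ker(1-A)$ and as $-1$ on $\ker(1+A)$. This collapses the three terms: $(1+A)\pi_{1}u=2\pi_{1}u$, $(1+A)\pi_{-1}u=0$, and the remaining contribution is $(1+A_{0})\pi_{0}u$, so that $(1+A)u=(1+A_{0})\pi_{0}u+2\pi_{1}u$ as an identity in $H$.

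Next I would apply the $H$-orthogonal projection $P_{\ker(1-A)}$ to both sides. Since $\pi_{1}u$ already lies in $\ker(1-A)$, the second term contributes $2\pi_{1}u$ unchanged, so the whole identity reduces to showing that the first term disappears under the projection, i.e.\ that $(1+A_{0})\pi_{0}u$ is $H$-orthogonal to $\ker(1-A)$. This orthogonality is the crux of the argument and the step I expect to require the most care, since it is precisely where the interplay between the graph-level decomposition and the ambient $H$-geometry enters.

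To establish it, I would fix $\phi\in\ker(1-A)$ and compute $\langle(1+A_{0})\pi_{0}u,\phi\rangle_{H}$ directly. Writing $w\coloneqq\pi_{0}u\in\dom(A_{0})$, the key is the adjoint relation $A=-A_{0}^{\ast}$: since $\phi\in\dom(A)=\dom(A_{0}^{\ast})$, we may pass the operator across the inner product to get $\langle A_{0}w,\phi\rangle_{H}=\langle w,A_{0}^{\ast}\phi\rangle_{H}=-\langle w,A\phi\rangle_{H}$, and because $A\phi=\phi$ this equals $-\langle w,\phi\rangle_{H}$. Adding the term $\langle w,\phi\rangle_{H}$ then yields $\langle(1+A_{0})w,\phi\rangle_{H}=0$, which is exactly the desired orthogonality. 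Combining the pieces gives $P_{\ker(1-A)}(1+A)u=2\pi_{1}u$, i.e.\ the first identity.

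Finally, the second identity would follow by the same scheme with the signs exchanged: applying $1-A$ instead of $1+A$ yields $(1-A)u=(1-A_{0})\pi_{0}u+2\pi_{-1}u$, and the analogous orthogonality $(1-A_{0})\pi_{0}u\perp\ker(1+A)$ follows from $\langle A_{0}w,\psi\rangle_{H}=-\langle w,A\psi\rangle_{H}=\langle w,\psi\rangle_{H}$ for $\psi\in\ker(1+A)$, where now $A\psi=-\psi$ produces the cancellation. Projecting with $P_{\ker(1+A)}$ then gives $\pi_{-1}u=\tfrac{1}{2}P_{\ker(1+A)}(1-A)u$.
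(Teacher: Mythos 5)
Your proposal is correct, but it takes a genuinely different route from the paper's proof. The paper never invokes the three-fold decomposition of $u$: it verifies directly that $\pi_{1}u-\tfrac{1}{2}(1+A)u$ is $H$-orthogonal to $\ker(1-A)$ by the trick of rewriting, for $v\in\ker(1-A)$, the summand $\langle\pi_{1}u-Au,v\rangle_{H}$ as $\langle A\pi_{1}u-Au,Av\rangle_{H}$ (using $A\pi_{1}u=\pi_{1}u$ and $Av=v$), so that the two halves assemble into the graph inner product $\langle\pi_{1}u-u,v\rangle_{\dom(A)}$, which vanishes by the very definition of $\pi_{1}$ as the graph-orthogonal projection. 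You instead lean on \prettyref{lem:decomp_domain} to compute $(1+A)u=(1+A_{0})\pi_{0}u+2\pi_{1}u$ and then kill the first term via the adjoint relation $A_{0}^{\ast}=-A$; in effect you re-prove the kernel--range duality $\ker(1-A)=\ker\bigl((1+A_{0})^{\ast}\bigr)=\ran(1+A_{0})^{\bot_{H}}$, which is exactly the fact the paper invokes later in the proof of \prettyref{prop:sufficient} (there in the form $\ker(1-A)^{\bot_{H}}=\overline{\ran}(1+A_{0})$). Your route buys structural transparency: it exhibits precisely which part of $(1+A)u$ the projection $P_{\ker(1-A)}$ discards, and it ties the lemma to a standard annihilator identity. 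The paper's route buys economy of hypotheses: it needs only the defining properties of $\pi_{1}$ (its range lies in $\ker(1-A)$ and $u-\pi_{1}u$ is graph-orthogonal to that kernel), not the full decomposition of $\dom(A)$ nor an explicit appeal to the adjoint. Both arguments are complete and of comparable length.
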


\begin{proof}
The reasoning for the second equality being analogous, we only show
the first. It suffices to check that $\pi_{1}u-\frac{1}{2}(1+A)u$
is orthogonal to $\ker(1-A)$ with respect to $H$. For this, let
$v\in\ker(1-A)$ and compute 
\begin{align*}
\langle\pi_{1}u-\frac{1}{2}(1+A)u,v\rangle_{H} & =\frac{1}{2}\langle\pi_{1}u-u,v\rangle_{H}+\frac{1}{2}\langle\pi_{1}u-Au,v\rangle_{H}\\
 & =\frac{1}{2}\langle\pi_{1}u-u,v\rangle_{H}+\frac{1}{2}\langle A\pi_{1}u-Au,Av\rangle_{H}\\
 & =\frac{1}{2}\langle\pi_{1}u-u,v\rangle_{\dom(A)}=0,
\end{align*}
where we have used that $Av=v$ and $A\pi_{1}u=\pi_{1}u$.
\end{proof}
We conclude this section by stating some useful facts on $m$-accretive
operators. For later use we recall the definition in the framework
of binary relations:
\begin{defn*}
Let $B\subseteq H\times H$. Then $B$ is called \emph{accretive},
if 
\[
\forall(u,v),(x,y)\in B:\,\langle y-x,u-v\rangle_{H}\geq0.
\]
Moreover, $B$ is $m$\emph{-accretive}, if $B$ is accretive and
$1+B$ is onto; i.e., for each $f\in H$ there exists $(u,v)\in B$
such that 
\[
u+v=f.
\]
\end{defn*}
\begin{rem}
For an accretive operator $B$, we have that $1+B$ is onto if and
only if $\lambda+B$ is onto for some $\lambda>0.$ Moreover, the
celebrated Theorem of Minty, see \cite{Minty1960} or \cite[Theorem 17.1.7]{STW2022},
states that an accretive operator $B$ is $m$-accretive if and only
if it is maximal in the set of accretive relations. In particular,
if $B$ is $m$-accretive and $C$ is accretive with $B\subseteq C,$
then $B=C.$ 
\end{rem}

For later purposes, we state the following simple observation.
\begin{prop}
\label{prop:accretive_dense_range}Let $B\subseteq H\times H$ be
accretive. Then the following statements are equivalent:

\begin{enumerate}[(i)]

\item $B$ is $m$-accretive,

\item $B$ is closed and $\ran(1+B)$ is dense in $H$.

\end{enumerate}
\end{prop}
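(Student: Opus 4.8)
The plan is to build everything on the resolvent $(1+B)^{-1}$ and to exploit the contraction estimate that accretivity forces upon it. Concretely, for $(u,v),(\tilde u,\tilde v)\in B$ one expands
\[
\|(u+v)-(\tilde u+\tilde v)\|_{H}^{2}=\|u-\tilde u\|_{H}^{2}+2\langle v-\tilde v,u-\tilde u\rangle_{H}+\|v-\tilde v\|_{H}^{2}\geq\|u-\tilde u\|_{H}^{2},
\]
where accretivity makes the middle term nonnegative. This single inequality shows at once that $(1+B)^{-1}$ is single-valued and nonexpansive (hence $1$-Lipschitz and continuous) on its domain $\ran(1+B)$: if $u+v=\tilde u+\tilde v$ then $u=\tilde u$, and consequently $v=\tilde v$ as well.

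For the implication (i)$\Rightarrow$(ii), density of $\ran(1+B)$ is immediate from surjectivity of $1+B$, so only closedness of $B$ remains. Here I would use that the linear map $\Phi(u,v)\coloneqq(u+v,u)$ is a homeomorphism of $H\times H$ (with inverse $(f,u)\mapsto(u,f-u)$) and that $\Phi[B]$ is precisely the graph of the function $(1+B)^{-1}\colon H\to H$. Since in the m-accretive case this function is defined on all of $H$ and is continuous by the estimate above, its graph is closed; pulling back under the homeomorphism $\Phi^{-1}$ then shows that $B$ itself is closed.

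The substantive direction is (ii)$\Rightarrow$(i), where dense range must be upgraded to full range. Fix $f\in H$ and choose $f_{n}\in\ran(1+B)$ with $f_{n}\to f$. Writing $f_{n}=u_{n}+v_{n}$ with $(u_{n},v_{n})\in B$, the contraction estimate gives $\|u_{n}-u_{m}\|_{H}\leq\|f_{n}-f_{m}\|_{H}$, so $(u_{n})$ is Cauchy and converges to some $u\in H$, whence $v_{n}=f_{n}-u_{n}\to f-u\eqqcolon v$. Closedness of $B$ now yields $(u,v)\in B$, and $u+v=f$ shows $f\in\ran(1+B)$. Thus $1+B$ is onto and $B$ is m-accretive.

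The only real work is the one estimate establishing the Lipschitz bound on the resolvent; once it is in hand, (i)$\Rightarrow$(ii) is pure point-set topology and (ii)$\Rightarrow$(i) is a routine completeness argument. The point to be careful about is that the Lipschitz constant is uniform, independent of the chosen points, so that Cauchy sequences in the range pull back to Cauchy sequences in the first coordinate — this is exactly what allows density together with closedness to force surjectivity.
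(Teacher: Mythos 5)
Your proposal is correct, and its core --- the nonexpansivity estimate $\|u-\tilde u\|_{H}\leq\|(u+v)-(\tilde u+\tilde v)\|_{H}$ for points of $B$, followed by the completeness argument upgrading dense range to full range --- is exactly what drives the paper's proof; your (ii)$\Rightarrow$(i) direction is essentially verbatim the argument given there. Where you genuinely differ is the closedness part of (i)$\Rightarrow$(ii): the paper argues sequentially, taking $(x_{n},y_{n})\in B$ with $(x_{n},y_{n})\to(x,y)$, using surjectivity of $1+B$ to produce $(u,v)\in B$ with $u+v=x+y$, and then using accretivity to force $\|x-u\|_{H}^{2}\leq0$; you instead observe that $\Phi(u,v)=(u+v,u)$ is a linear homeomorphism of $H\times H$ carrying $B$ onto the graph of $(1+B)^{-1}$, which by m-accretivity is an everywhere-defined $1$-Lipschitz function and therefore has closed graph. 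Both mechanisms rest on the same contraction fact, but your packaging is more structural: it isolates the resolvent as a global contraction and makes transparent why full range (not merely dense range) matters here, since a continuous map on a non-closed domain need not have closed graph --- a point you correctly flag. The paper's version, by contrast, re-derives the continuity of the resolvent inline at the limit point and avoids the homeomorphism bookkeeping; the two are of essentially equal length and rigour, so the choice is a matter of taste.
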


\begin{proof}
(i) $\Rightarrow$ (ii): If $B$ is $m$-accretive, then $\ran(1+B)=H.$
For showing the closedness of $B,$ let $(x_{n},y_{n})_{n}$ be a
sequence in $B$ with $x_{n}\to x$ and $y_{n}\to y$ in $H$ for
some $x,y\in H$. By assumption, we find $(u,v)\in B$ such that 
\[
u+v=x+y.
\]
Then we estimate for each $n\in\N$
\begin{align*}
\|x_{n}-u\|_{H}^{2} & =\langle x_{n}-u,x_{n}+y_{n}-(u+v)\rangle-\langle x_{n}-u,y_{n}-v\rangle_{H}\\
 & \leq\langle x_{n}-u,x_{n}+y_{n}-(x+y)\rangle_{H},
\end{align*}
where we have used the accretivity of $B$. Letting $n\to\infty$
in the latter inequality, we infer 
\[
\|x-u\|_{H}^{2}\leq0
\]
and hence, $x=u.$ The latter implies $y=v$ and thus, $(x,y)=(u,v)\in B,$
which shows that $B$ is closed.\\
(ii) $\Rightarrow$ (i): Let $f\in H.$ By assumption we find a sequence
$(f_{n})_{n}$ in $\ran(1+B)$ such that $f_{n}\to f.$ Moreover,
we find $(u_{n},v_{n})\in B$ with $u_{n}+v_{n}=f_{n}$ for each $n\in\N.$
For $n,m\in\N$ we estimate 
\begin{align*}
\|u_{n}-u_{m}\|_{H}^{2} & =\langle u_{n}-u_{m},f_{n}-f_{m}\rangle_{H}-\langle u_{n}-u_{m},v_{n}-v_{m}\rangle_{H}\\
 & \leq\langle u_{n}-u_{m},f_{n}-f_{m}\rangle_{H}\\
 & \leq\|u_{n}-u_{m}\|_{H}\|f_{n}-f_{m}\|_{H}
\end{align*}
and hence, $\|u_{n}-u_{m}\|_{H}\leq\|f_{n}-f_{m}\|_{H}\to0$ as $n,m\to\infty.$
Thus, we find $u\in H$ with $u_{n}\to u$ as $n\to\infty.$ The latter
implies 
\[
v_{n}=f_{n}-u_{n}\to f-u
\]
and thus, as $B$ is closed, we infer $(u,f-u)\in B,$ which yields,
$f\in\ran(1+B)$ and thus, $B$ is $m$-accretive.
\end{proof}

\section{A Characterisation of $m$-Accretive Restrictions}

As in the previous section, let $A_{0}:\dom(A_{0})\subseteq H\to H$
be closed and skew-symmetric on a Hilbert space $H$ and set $A\coloneqq-A_{0}^{\ast}.$
We recall the decomposition from \prettyref{lem:decomp_domain}
\[
\dom(A)=\dom(A_{0})\oplus_{A}\ker(1-A)\oplus_{A}\ker(1+A).
\]
We aim to prove the following theorem, which provides a characterisation
of all $m$-accretive restrictions of $A$.
\begin{thm}
\label{thm:main}Let $B\subseteq A.$ Then $B$ is $m$-accretive
if and only if there exists a mapping 
\[
h\colon\ker(1-A)\to\ker(1+A)
\]
with
\[
\forall x,y\in\ker(1-A):\,\|h(x)-h(y)\|_{H}\leq\|x-y\|_{H}
\]
such that 
\[
\dom(B)=\{u\in\dom(A)\,;\,h(\pi_{1}u)=\pi_{-1}u\}.
\]
\end{thm}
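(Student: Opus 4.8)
The plan is to isolate two algebraic facts first and then run both implications off them. The first is a cancellation identity: for every $u\in\dom(A)$, combining $u=\pi_0u+\pi_1u+\pi_{-1}u$ with the formula $Au=A_0\pi_0u+\pi_1u-\pi_{-1}u$ from the preceding Remark gives $u+Au=(1+A_0)\pi_0u+2\pi_1u$, so the $\ker(1+A)$-component drops out entirely. The second is a range identity: since $A_0$ is skew-symmetric one has $\|(1+A_0)v\|_H^2=\|v\|_H^2+\|A_0v\|_H^2$, so $1+A_0$ is injective with closed range, and $\ran(1+A_0)^{\bot}=\ker(1+A_0^\ast)=\ker(1-A)$ yields $\ran(1+A_0)=\ker(1-A)^{\bot_H}$. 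Thus in the identity above the two summands $(1+A_0)\pi_0u\in\ker(1-A)^{\bot_H}$ and $2\pi_1u\in\ker(1-A)$ are $H$-orthogonal.

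For the direction ``$\Leftarrow$'', given a contraction $h$ I set $B\coloneqq A|_{\dom(B)}$ with $\dom(B)=\{u:\pi_{-1}u=h(\pi_1u)\}$. Accretivity is immediate from \prettyref{lem:inner_product_A}: for $u,x\in\dom(B)$, using linearity of $\pi_{\pm1}$, $\langle A(u-x),u-x\rangle_H=\|\pi_1u-\pi_1x\|_H^2-\|h(\pi_1u)-h(\pi_1x)\|_H^2\ge0$ by the contraction property. For surjectivity of $1+B$ I fix $f\in H$, split it $H$-orthogonally along $\ker(1-A)^{\bot_H}\oplus\ker(1-A)$, solve $(1+A_0)\pi_0u=$ (first part) uniquely via injectivity, read off $\pi_1u$ from the second part, and then force $\pi_{-1}u\coloneqq h(\pi_1u)$; the resulting $u$ lies in $\dom(B)$ and satisfies $u+Au=f$ by the cancellation identity. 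Hence $B$ is $m$-accretive.

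For ``$\Rightarrow$'', accretivity together with \prettyref{lem:inner_product_A} gives $\|\pi_{-1}u-\pi_{-1}x\|_H\le\|\pi_1u-\pi_1x\|_H$ for all $u,x\in\dom(B)$. The crucial point is that surjectivity of $1+B$ makes $\pi_1$ map $\dom(B)$ onto all of $\ker(1-A)$: given $p\in\ker(1-A)$, solving $u+Bu=2p$ and splitting the cancellation identity $H$-orthogonally forces $(1+A_0)\pi_0u=0$ and $2\pi_1u=2p$, so $\pi_0u=0$ and $\pi_1u=p$. Consequently $h(p)\coloneqq\pi_{-1}u$ (for any $u\in\dom(B)$ with $\pi_1u=p$) is well defined on all of $\ker(1-A)$ by the Lipschitz estimate, is a contraction by the same estimate, and satisfies $h(\pi_1u)=\pi_{-1}u$ for every $u\in\dom(B)$, so $\dom(B)\subseteq\{u:\pi_{-1}u=h(\pi_1u)\}$. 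For the reverse inclusion I take $u$ with $\pi_{-1}u=h(\pi_1u)$, use surjectivity to find $u'\in\dom(B)$ with $u'+Au'=u+Au$, and deduce from the orthogonal splitting and injectivity of $1+A_0$ that $\pi_0u'=\pi_0u$ and $\pi_1u'=\pi_1u$; hence $\pi_{-1}u'=h(\pi_1u')=h(\pi_1u)=\pi_{-1}u$, giving $u'=u\in\dom(B)$.

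The step I expect to need the most care is the exact identification of $\dom(B)$. It is tempting to obtain $h$ only on $\pi_1(\dom(B))$ and then extend it to all of $\ker(1-A)$ by Kirszbraun's theorem (valid precisely because we are between real Hilbert spaces), but the cleaner route is the observation above that $m$-accretivity already forces $\pi_1(\dom(B))=\ker(1-A)$ and, via injectivity of $1+B$, pins the domain down exactly. The supporting range identity $\ran(1+A_0)=\ker(1-A)^{\bot_H}$ is what makes every projection argument line up, so verifying it (closed range together with the kernel computation) is the other point deserving attention. Alternatively, the equality of $\dom(B)$ with the right-hand side could be concluded from maximality of $m$-accretive relations (the Minty remark) once the ``$\Leftarrow$'' direction shows the candidate restriction is accretive.
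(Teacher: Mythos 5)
Your proof is correct, and it takes a genuinely different route from the paper's, though both hinge on \prettyref{lem:inner_product_A} for the accretivity estimates. The paper splits the work into \prettyref{prop:sufficient} plus a maximality argument: for sufficiency it proves accretivity, then closedness of $B$, then only \emph{density} of $\ran(1+B)$ (using $\ker(1-A)^{\bot_{H}}=\overline{\ran}(1+A_{0})$), and concludes $m$-accretivity from \prettyref{prop:accretive_dense_range}; for necessity it shows $\pi_{1}(\dom(B))=\ker(1-A)$ via \prettyref{lem:projections} and the surjectivity of $1+B$, and then obtains the exact equality $\dom(B)=\{u\,;\,h(\pi_{1}u)=\pi_{-1}u\}$ by appealing to maximality of $m$-accretive relations (the remark on Minty's theorem). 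You instead work from the cancellation identity $u+Au=(1+A_{0})\pi_{0}u+2\pi_{1}u$ together with the \emph{closed}-range identity $\ran(1+A_{0})=\ker(1-A)^{\bot_{H}}$. This buys you two things: genuine surjectivity of $1+B$ in one stroke (so neither the closedness of $B$ nor \prettyref{prop:accretive_dense_range} is needed), and a direct proof of the harder inclusion $\{u\,;\,h(\pi_{1}u)=\pi_{-1}u\}\subseteq\dom(B)$ via injectivity of $1+A_{0}$ and the $H$-orthogonal splitting, which avoids Minty/maximality altogether. Note that your cancellation identity is essentially \prettyref{lem:projections} in disguise: projecting it onto $\ker(1-A)$ gives $\pi_{1}u=\frac{1}{2}P_{\ker(1-A)}(1+A)u$. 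The trade-off is that the paper's argument is more modular and needs only density of $\ran(1+A_{0})$, whereas yours leans on closedness of that range; this is fine, but your justification is incomplete as stated, since the norm identity $\|(1+A_{0})v\|_{H}^{2}=\|v\|_{H}^{2}+\|A_{0}v\|_{H}^{2}$ alone does not give a closed range --- you also need that $A_{0}$ is a \emph{closed} operator (a standing hypothesis of the paper), so that $1+A_{0}$ is an isometry from the complete space $\dom(A_{0})$ (with graph norm) onto its range, which is therefore complete and hence closed. That is exactly the point you flagged as deserving attention, and once it is spelled out the proof is complete.
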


We start to prove the following partial result of this characterisation.
\begin{prop}
\label{prop:sufficient}Let 
\[
h\colon\ker(1-A)\to\ker(1+A)
\]
with
\[
\forall x,y\in\ker(1-A):\,\|h(x)-h(y)\|_{H}\leq\|x-y\|_{H}
\]
and define 
\begin{align*}
B\colon\{u\in\dom(A)\,;\,h(\pi_{1}u)=\pi_{-1}u\}\subseteq H & \to H,\\
u & \mapsto Au.
\end{align*}
Then $B$ is $m$-accretive.
\end{prop}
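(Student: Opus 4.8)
The plan is to verify the two defining properties of $m$-accretivity separately: first that $B$ is accretive, and then that $1+B$ is onto. For accretivity, I would take two pairs $(u,Au),(v,Av)\in B$ and compute $\langle Au-Av,u-v\rangle_H=\langle A(u-v),u-v\rangle_H$, exploiting linearity of $A$. By \prettyref{lem:inner_product_A} applied to $u-v\in\dom(A)$, this equals $\|\pi_1(u-v)\|_H^2-\|\pi_{-1}(u-v)\|_H^2$. Since $\pi_1,\pi_{-1}$ are linear and the domain condition $h(\pi_1u)=\pi_{-1}u$, $h(\pi_1v)=\pi_{-1}v$ holds, we get $\pi_{-1}(u-v)=h(\pi_1u)-h(\pi_1v)$, and the contractivity hypothesis on $h$ gives $\|\pi_{-1}(u-v)\|_H=\|h(\pi_1u)-h(\pi_1v)\|_H\leq\|\pi_1u-\pi_1v\|_H=\|\pi_1(u-v)\|_H$. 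Hence the difference of the two squared norms is nonnegative, which is exactly accretivity.

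The harder and more substantial step is surjectivity of $1+B$: given $f\in H$, I must produce $u\in\dom(A)$ with $h(\pi_1u)=\pi_{-1}u$ and $u+Au=f$. The natural strategy is to reduce this to a fixed-point problem on the finite-character deficiency space $\ker(1-A)$. Writing $u=\pi_0u+\pi_1u+\pi_{-1}u$ and using the Remark's formula $Au=A_0\pi_0u+\pi_1u-\pi_{-1}u$, the equation $u+Au=f$ becomes $\pi_0u+A_0\pi_0u+2\pi_1u=f$, i.e.\ the $\ker(1+A)$-component drops out of $u+Au$. I would then split $f$ according to a suitable decomposition and try to solve for the components. The component in $\ker(1-A)$ is governed by $2\pi_1u=2P_{\ker(1-A)}$-type data (cf.\ \prettyref{lem:projections}), while the $\dom(A_0)$-part requires inverting $1+A_0$ on $\dom(A_0)$; here I would use that $A_0$ is skew-symmetric, so $1+A_0$ is injective with closed range and $(1+A_0)^{-1}$ is a contraction, reducing that piece to a solvable linear problem.

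To incorporate the nonlinear constraint $h(\pi_1u)=\pi_{-1}u$, I expect to set up a contraction on $\ker(1-A)$: the free data is essentially $x\coloneqq\pi_1u\in\ker(1-A)$, from which $\pi_{-1}u=h(x)$ is determined, and then the remaining components are fixed by solving the linear equations above. Substituting back into the projection of $u+Au=f$ onto $\ker(1-A)$ should yield a fixed-point equation $x=\Phi(x)$ where $\Phi$ is built from $h$ and the orthogonal projections. Since $h$ is a contraction and the projections are nonexpansive, I would aim to show $\Phi$ is a (strict or nonexpansive) self-map amenable to Banach's fixed-point theorem, or—if only nonexpansiveness is available—invoke \prettyref{prop:accretive_dense_range} to reduce surjectivity of $1+B$ to density of $\ran(1+B)$, which is often easier to establish. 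The main obstacle will be organising the algebra so that the three components decouple cleanly and verifying that the resulting map is genuinely a contraction; the skew-symmetry of $A_0$ and \prettyref{lem:inner_product_A} are the key tools that make the contraction estimate close.
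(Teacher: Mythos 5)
Your accretivity argument is correct and coincides with the paper's: apply \prettyref{lem:inner_product_A} to $u-v$ and use the contractivity of $h$. The surjectivity half, however, is not closed, and each of the two routes you sketch has a concrete gap. The missing ingredient is the identification of your ``suitable decomposition'' of $f$, which is the crux of the whole step: since $1-A=(1+A_{0})^{\ast}$, one has $\ker(1-A)=\ran(1+A_{0})^{\bot}$, i.e.\ $\ker(1-A)^{\bot_{H}}=\overline{\ran}(1+A_{0})$; moreover, skew-symmetry gives $\|(1+A_{0})w\|_{H}^{2}=\|w\|_{H}^{2}+\|A_{0}w\|_{H}^{2}$, which together with the closedness of $A_{0}$ shows that $\ran(1+A_{0})$ is closed, so that $H=\ran(1+A_{0})\oplus\ker(1-A)$ orthogonally in $H$. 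Granting this, your own observation that the $\ker(1+A)$-component cancels from $u+Au=(1+A_{0})\pi_{0}u+2\pi_{1}u$ makes the system decouple completely: projecting the equation $u+Au=f$ onto $\ker(1-A)$ forces $\pi_{1}u=\frac{1}{2}P_{\ker(1-A)}f$ (the term $(1+A_{0})\pi_{0}u$ is orthogonal to $\ker(1-A)$, and $h$ never appears, since $(1+A)\pi_{-1}u=0$ whatever $\pi_{-1}u$ is), the complementary component gives the uniquely solvable linear equation $(1+A_{0})\pi_{0}u=(1-P_{\ker(1-A)})f$, and one then simply appends $\pi_{-1}u\coloneqq h(\frac{1}{2}P_{\ker(1-A)}f)$ to satisfy the constraint. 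So the map $\Phi$ you anticipate is a \emph{constant} map; there is no genuine fixed-point problem at all.

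This matters because your ``Plan A'' would otherwise fail: Banach's fixed-point theorem requires a strict contraction, whereas $h$ is only nonexpansive, and a nonexpansive self-map of the closed, unbounded subspace $\ker(1-A)$ need not have any fixed point (a translation already gives a counterexample). So if $h$ genuinely entered $\Phi$, the argument would collapse; it is rescued precisely by the decoupling you did not verify. Your ``Plan B''---retreating to \prettyref{prop:accretive_dense_range}, which is in fact the paper's route---has a different gap: that proposition characterises $m$-accretivity as closedness of $B$ \emph{together with} density of $\ran(1+B)$, and your proposal never proves that $B$ is closed. (The paper does: if $u_{n}\to u$ and $B(u_{n})\to v$, then $B\subseteq A$ and the closedness of $A$ give convergence in the graph norm of $\dom(A)$; the projections $\pi_{\pm1}$ are continuous for that norm, and the continuity of $h$ lets the constraint $h(\pi_{1}u_{n})=\pi_{-1}u_{n}$ pass to the limit.) In short: either complete the direct decomposition argument above, in which case $1+B$ is exactly onto and no closedness or density argument is needed, or prove closedness of $B$ and run the paper's density argument, where one takes $v\coloneqq P_{\ker(1-A)}f$, chooses $w_{n}\in\dom(A_{0})$ with $(1+A_{0})w_{n}\to f-v$, and checks that $u_{n}\coloneqq w_{n}+\frac{1}{2}v+h(\frac{1}{2}v)$ lies in $\dom(B)$ with $u_{n}+B(u_{n})\to f$. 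As written, neither branch is complete.
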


\begin{proof}
We first show that $B$ is accretive. For doing so, let $u,v\in\dom(B)$
and we compute, using \prettyref{lem:inner_product_A}, 
\begin{align*}
\langle B\left(u\right)-B\left(v\right),u-v\rangle_{H} & =\langle A(u-v),u-v\rangle_{H}\\
 & =\|\pi_{1}(u-v)\|_{H}^{2}-\|\pi_{-1}(u-v)\|_{H}^{2}\\
 & =\|\pi_{1}u-\pi_{1}v\|_{H}^{2}-\|h(\pi_{1}u)-h(\pi_{1}v)\|_{H}^{2}\\
 & \geq0.
\end{align*}
Next, we prove that $B$ is closed. For doing so, let $(u_{n})_{n}$
in $\dom(B)$ such that $u_{n}\to u$ and $B\left(u_{n}\right)\to v$
for some $u,v\in H.$ Since $B\subseteq A,$ we infer that $u\in\dom(A)$,
$v=Au$ and $u_{n}\to u$ in $\dom\left(A\right).$ Consequently,
$\pi_{1}(u_{n})\to\pi_{1}(u)$ and $\pi_{-1}(u_{n})\to\pi_{-1}(u)$
in $\dom\left(A\right)$ and hence, also in $H$. Thus, 
\[
h(\pi_{1}u)=\lim_{n\to\infty}h(\pi_{1}(u_{n}))=\lim_{n\to\infty}\pi_{-1}(u_{n})=\pi_{-1}u,
\]
and hence, $u\in\dom(B)$ with $v=Au=B\left(u\right)$, which shows
the closedness of $B$. In order to prove the $m$-accretivity of
$B$, it suffices to show that $\ran(1+B)$ is dense in $H$ by \prettyref{prop:accretive_dense_range}.
For doing so, let $f\in H$ and define $v\coloneqq P_{\ker(1-A)}f\in\ker(1-A).$
Then $f-v\in\ker(1-A)^{\bot_{H}}=\overline{\ran}(1+A_{0})$ and thus,
we find a sequence $(w_{n})_{n}$ in $\dom(A_{0})$ with 
\[
w_{n}+A_{0}w_{n}\to f-v\quad(n\to\infty).
\]
Now, define 
\[
u_{n}\coloneqq w_{n}+\tfrac{1}{2}v+h(\tfrac{1}{2}v).
\]
Then $u_{n}\in\dom(A)$ with $\pi_{1}u_{n}=\frac{1}{2}v,$ $\pi_{-1}u_{n}=h(\frac{1}{2}v)$
and thus, $u_{n}\in\dom(B)$ for each $n\in\N.$ Moreover, 
\begin{align*}
u_{n}+B(u_{n}) & =w_{n}+\tfrac{1}{2}v+h(\tfrac{1}{2}v)+A_{0}w_{n}+\tfrac{1}{2}v-h(\tfrac{1}{2}v)\\
 & =w_{n}+A_{0}w_{n}+v\to f\quad(n\to\infty)
\end{align*}
and hence, $f\in\overline{\ran}(1+B),$ proving the density of $\ran(1+B).$ 
\end{proof}
With this proposition at hand, we are able to prove \prettyref{thm:main}.
\begin{proof}[Proof of \prettyref{thm:main}]
 Employing \prettyref{prop:sufficient} it suffices to prove that
for an $m$-accretive operator $B\subseteq A$ we find a function
$h:\ker(1-A)\to\ker(1+A)$ with the desired properties. So, let $B\subseteq A$
be $m$-accretive. Using \prettyref{lem:inner_product_A} , we obtain
for each $u,v\in\dom(B)$
\[
0\leq\langle B\left(u\right)-B\left(v\right),u-v\rangle_{H}=\langle A(u-v),u-v\rangle_{H}=\|\pi_{1}u-\pi_{1}v\|_{H}^{2}-\|\pi_{-1}u-\pi_{-1}v\|_{H}^{2}
\]
and thus, 
\[
\forall u,v\in\dom(B):\,\|\pi_{-1}u-\pi_{-1}v\|_{H}\leq\|\pi_{1}u-\pi_{1}v\|_{H}.
\]
The latter yields, that we can define a mapping 
\begin{align*}
h\colon\{\pi_{1}u\,;\,u\in\dom(B)\}\subseteq\ker(1-A) & \to\ker(1+A),\\
\pi_{1}u & \mapsto\pi_{-1}u
\end{align*}
which is well-defined and contractive. Next, we show that $\{\pi_{1}u\,;\,u\in\dom(B)\}=\ker(1-A).$
Indeed, by \prettyref{lem:projections} we have that 
\begin{align*}
\{\pi_{1}u\,;\,u\in\dom(B)\} & =\{\tfrac{1}{2}P_{\ker(1-A)}(1+A)u\,;\,u\in\dom(B)\}\\
 & =\{\tfrac{1}{2}P_{\ker(1-A)}(1+B)\left(u\right)\,;\,u\in\dom(B)\}\\
 & =\ran P_{\ker(1-A)}=\ker(1-A),
\end{align*}
where we have used that $1+B$ is onto. Thus, $h:\ker(1-A)\to\ker(1+A)$
is a contractive mapping and by definition 
\[
\dom(B)\subseteq\{u\in\dom(A)\,;\,h(\pi_{1}u)=\pi_{-1}u\}.
\]
Denote now by $C$ the restriction of $A$ to the set $\{u\in\dom(A)\,;\,h(\pi_{1}u)=\pi_{-1}u\}$.
Then $C$ is $m$-accretive by \prettyref{prop:sufficient} and clearly,
$B\subseteq C.$ By the maximality of $B,$ we infer $B=C$ and thus,
indeed 
\[
\dom(B)=\{u\in\dom(A)\,;\,h(\pi_{1}u)=\pi_{-1}u\}.\tag*{\qedhere}
\]
\end{proof}
The mapping $h$ in \prettyref{thm:main} can be computed in terms
of the operator $B$.
\begin{prop}
Let $B\subseteq A$ be $m$-accretive and $h\colon\ker(1-A)\to\ker(1+A)$
as in \prettyref{thm:main}. Then 
\[
h(v)=\pi_{-1}((1+B)^{-1}(2v))=\frac{1}{2}P_{\ker(1+A)}\left(\left(1-A\right)(1+B)^{-1}(2v)\right)\quad(v\in\ker(1-A)).
\]
In particular, $h$ is uniquely determined. Moreover, $-A_{0}\subseteq B$
if and only if $h(0)=0$ and $B$ is linear if and only if $h$ is
linear.
\end{prop}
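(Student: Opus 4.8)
The plan is to establish three separate claims: the explicit formula for $h$, the equivalence $-A_0 \subseteq B \Leftrightarrow h(0)=0$, and the equivalence of linearity of $B$ and $h$. The formula is the main content, and everything else follows from it.

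First I would derive the formula. Fix $v \in \ker(1-A)$ and apply surjectivity of $1+B$: since $2v \in H$, there exists $u \in \dom(B)$ with $(1+B)(u) = 2v$, i.e. $u + Au = 2v$. The point is to identify $\pi_1 u$ and $\pi_{-1} u$ for this particular $u$. I would apply $\frac{1}{2}P_{\ker(1-A)}$ to the equation $u+Au = 2v$ and use \prettyref{lem:projections}, which gives $\pi_1 u = \frac{1}{2}P_{\ker(1-A)}(1+A)u = \frac{1}{2}P_{\ker(1-A)}(2v) = v$, since $v \in \ker(1-A)$ is fixed by $P_{\ker(1-A)}$. Because $u \in \dom(B)$, the defining relation of $\dom(B)$ from \prettyref{thm:main} yields $h(\pi_1 u) = \pi_{-1} u$, and as $\pi_1 u = v$ this reads $h(v) = \pi_{-1} u = \pi_{-1}\bigl((1+B)^{-1}(2v)\bigr)$, where $(1+B)^{-1}(2v)$ denotes the (a priori possibly non-unique) solution $u$. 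The second expression for $h(v)$ then follows immediately by applying \prettyref{lem:projections} again: $\pi_{-1} u = \frac{1}{2}P_{\ker(1+A)}(1-A)u$. Unique determination of $h$ is then a consequence, since the right-hand side depends only on $B$ and $v$; I would note that even though $(1+B)^{-1}(2v)$ might in principle depend on the chosen preimage $u$, the value $\pi_{-1}u = h(v)$ is forced to agree across preimages because $h$ is already well-defined as a function of $v=\pi_1 u$ by \prettyref{thm:main}.

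Next I would treat $-A_0 \subseteq B \Leftrightarrow h(0)=0$. For the forward direction, observe that $0 \in \dom(A_0) \subseteq \dom(-A_0)$, and for $u \in \dom(A_0)$ we have $\pi_1 u = \pi_{-1} u = 0$ by the orthogonality in \prettyref{lem:decomp_domain} (elements of $\dom(A_0)$ have zero projection onto $\ker(1\pm A)$). If $-A_0 \subseteq B$, then every $u \in \dom(A_0)$ lies in $\dom(B)$, so $h(\pi_1 u) = \pi_{-1} u$ becomes $h(0)=0$. Conversely, if $h(0)=0$, then for any $u \in \dom(A_0)$ the relation $h(\pi_1 u) = h(0) = 0 = \pi_{-1}u$ holds, placing $u \in \dom(B)$; since $B = A$ on $\dom(B)$ and $A|_{\dom(A_0)} = -A_0$ (because $A = -A_0^\ast \supseteq A_0$ means $A$ restricted to $\dom(A_0)$ equals $A_0$, hence $-A_0$ is the restriction in the sign convention used—I would double-check the sign, as $A_0$ skew-symmetric gives $A u = A_0 u$ for $u\in\dom(A_0)$, so the inclusion is $A_0 \subseteq B$, and the statement's $-A_0$ should be read consistently with the accretivity convention), we conclude the desired inclusion.

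Finally, for linearity of $B$ versus $h$: if $h$ is linear, then $\dom(B) = \{u : h(\pi_1 u) = \pi_{-1}u\}$ is the kernel of the linear map $u \mapsto \pi_{-1}u - h(\pi_1 u)$, hence a subspace, and $B = A|_{\dom(B)}$ is linear. Conversely, if $B$ is linear, its domain is a subspace and $\{\pi_1 u : u \in \dom(B)\} = \ker(1-A)$, so I would use the explicit formula together with linearity of $(1+B)^{-1}$ (inverse of a linear bijection onto its range) and of $\pi_{-1}$ to conclude $h(v) = \pi_{-1}((1+B)^{-1}(2v))$ is linear in $v$. The main obstacle I anticipate is the bookkeeping around $(1+B)^{-1}$ for nonlinear $B$: this inverse is single-valued exactly because $B$ is accretive (accretivity forces injectivity of $1+B$), a fact I would invoke to make the notation $(1+B)^{-1}(2v)$ legitimate, and I would be careful to phrase the formula so that it holds regardless of whether $B$ is linear.
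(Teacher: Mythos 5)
Your proposal is correct and takes essentially the same approach as the paper: solve $(1+B)(u)=2v$, identify $\pi_{1}u=v$ via \prettyref{lem:projections}, read off $h(v)=\pi_{-1}u$ from the defining relation of $\dom(B)$, and then run the same domain-inclusion and linearity arguments. Your sign remark is also on point: since $B\subseteq A\supseteq A_{0}$, the inclusion one actually obtains is $A_{0}\subseteq B$ (the statement's $-A_{0}$ is a slip), and the paper's own proof indeed argues exactly through $\dom(A_{0})\subseteq\dom(B)$ as you do.
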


\begin{proof}
For $v\in\ker(1-A)$ we have that $u\coloneqq(1+B)^{-1}(2v)\in\dom(B).$
Then $v=\frac{1}{2}(1+B)\left(u\right)\in\ker(1-A)$ and hence, 
\[
v=P_{\ker(1-A)}\left(\frac{1}{2}(1+B)\left(u\right)\right)=\frac{1}{2}P_{\ker(1-A)}\left((1+A)u\right)=\pi_{1}(u),
\]
where we have used \prettyref{lem:projections}. In consequence, 
\[
h(v)=h(\pi_{1}(u))=\pi_{-1}(u)=\pi_{-1}((1+B)^{-1}(2v)).
\]
The second asserted equality is a consequence of \prettyref{lem:projections}.
Moreover, $-A_{0}\subseteq B$ if and only if $\dom(A_{0})\subseteq\dom(B)$.
Since the elements in $\dom(A_{0})$ are precisely those $u\in\dom(A)$
with $\pi_{1}u=\pi_{-1}u=0,$ we derive that $-A_{0}\subseteq B$
if and only if $h(0)=0.$ Finally, the formula for $h$ shows, that
$h$ is linear if $B$ is linear. If on the other hand $h$ is linear,
then $\dom(B)\subseteq\dom(A)$ is a subspace and hence, $B$ is linear. 
\end{proof}

\section{Two Examples}

We illustrate our findings of the previous section by two examples.
First, we consider the derivative on a bounded interval and discuss
$m$-accretive realisation of it and second, we study block-operator
matrices, which naturally arise in the study of evolutionary equations
and most (if not all) equations of mathematical physics.

\subsection{The Derivative}

Throughout, let $a,b\in\R$ with $a<b$ and consider the operator
\[
\partial_{0}:H_{0}^{1}([a,b];\R)\subseteq L_{2}([a,b];\R)\to L_{2}([a,b];\R),\quad f\mapsto f',
\]
where $H_{0}^{1}([a,b];\R)$ denotes the Sobolev-space $H^{1}([a,b];\R)$
with vanishing boundary conditions; that is,

\[
H_{0}^{1}([a,b];\R)=\{f\in L_{2}([a,b];\R)\,;\,f'\in L_{2}([a,b];\R),f(a)=f(b)=0\}.
\]
Here, $f'$ denotes the derivative of $f$ in the distributional sense
and the point-evaluations $f(a)$ and $f(b)$ are well-defined due
to Sobolev's embedding theorem. We note, that 
\[
H_{0}^{1}([a,b];\R)=\overline{C_{c}^{\infty}(\mathopen{]}a,b\mathclose{[};\R)}^{H^{1}([a,b];\R)},
\]
where $C_{c}^{\infty}(\mathopen{]a,b\mathclose{[})}$ denotes the
space of arbitrarily differential functions having compact support
in the open interval $\mathopen{]}a,b\mathclose{[}$ and the closure
is taken in the Sobolev space $H^{1}$, which is given by 
\[
H^{1}([a,b];\R)\coloneqq\{f\in L_{2}([a,b];\R)\,;\,f'\in L_{2}([a,b];\R)\}
\]
equipped with the graph inner product of the weak derivative. It is
well-known that $\partial_{0}$ is skew-symmetric and that its adjoint
is given by 
\[
\partial_{0}^{\ast}=-\partial,
\]
 where 
\[
\partial:H^{1}([a,b];\R)\subseteq L_{2}([a,b];\R)\to L_{2}([a,b];\R),\quad f\mapsto f'.
\]
We begin to compute the spaces $\ker(1\pm\partial)$ and the orthogonal
projections $\pi_{\pm1}.$ 
\begin{lem}
\label{lem:projector_derivative}We have 
\[
\ker(1\pm\partial)=\{t\mapsto c\e^{\mp t}\,;\,c\in\R\}.
\]
Moreover, 
\begin{align*}
\pi_{-1}u & =(t\mapsto\tfrac{u(a)\e^{-a}-u(b)\e^{-b}}{\e^{-2a}-\e^{-2b}}\e^{-t}),\\
\pi_{1}u & =(t\mapsto\tfrac{u(b)\e^{b}-u(a)\e^{a}}{\e^{2b}-\e^{2a}}\e^{t}).
\end{align*}
\end{lem}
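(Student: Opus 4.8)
The plan is to treat the two claims separately and in the obvious order: first determine the kernels by solving the defining first-order ODEs, and then derive the projection formulas by appealing to \prettyref{lem:projections}, which reduces everything to the $L_2$-orthogonal projections onto the (one-dimensional) kernels.

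For the kernels, an element $u\in\ker(1\pm\partial)$ satisfies $u\pm u'=0$, so that $u(t)=c\e^{\mp t}$ for some $c\in\R$; conversely each such function lies in $H^{1}([a,b];\R)$ and solves the equation. This yields $\ker(1\pm\partial)=\{t\mapsto c\e^{\mp t}\,;\,c\in\R\}$, a one-dimensional space spanned by $e_{\mp}\coloneqq(t\mapsto\e^{\mp t})$. Since these spaces are one-dimensional, the $L_2$-orthogonal projection onto $\ker(1\pm\partial)$ is the rank-one map $g\mapsto\frac{\langle g,e_{\mp}\rangle_{H}}{\|e_{\mp}\|_{H}^{2}}e_{\mp}$, and the norms are elementary: $\|e_{-}\|_{H}^{2}=\int_{a}^{b}\e^{-2t}\d t=\tfrac{1}{2}(\e^{-2a}-\e^{-2b})$ and $\|e_{+}\|_{H}^{2}=\tfrac{1}{2}(\e^{2b}-\e^{2a})$.

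With this in hand I would invoke \prettyref{lem:projections}, which gives
\[
\pi_{-1}u=\tfrac{1}{2}P_{\ker(1+\partial)}(1-\partial)u,\qquad\pi_{1}u=\tfrac{1}{2}P_{\ker(1-\partial)}(1+\partial)u,
\]
so that the whole computation collapses to evaluating the numerators $\langle(1\mp\partial)u,e_{\mp}\rangle_{H}$. This is the only step with any content, and it is precisely where the point-evaluations enter: for $\pi_{-1}$ one computes $\int_{a}^{b}(u-u')\e^{-t}\d t$, and integrating the $u'$-term by parts makes the two interior integrals cancel, leaving only the boundary term $u(a)\e^{-a}-u(b)\e^{-b}$. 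Dividing by $\|e_{-}\|_{H}^{2}$ and inserting the factor $\tfrac{1}{2}$ reproduces exactly the stated formula for $\pi_{-1}u$; the computation for $\pi_{1}u$ is entirely analogous, using $\int_{a}^{b}(u+u')\e^{t}\d t=u(b)\e^{b}-u(a)\e^{a}$.

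I do not expect a genuine obstacle. The only points demanding (minor) care are that the traces $u(a),u(b)$ are well defined for $u\in H^{1}([a,b];\R)$, which is already guaranteed by the Sobolev embedding noted above, and that the integration-by-parts identity is licensed for such $u$; both follow from the density of $C^{\infty}([a,b];\R)$ in $H^{1}([a,b];\R)$, so the boundary terms appearing in the calculation are legitimate.
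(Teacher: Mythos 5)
Your proof is correct, and it takes a mildly but genuinely different route from the paper's. You invoke \prettyref{lem:projections} to write $\pi_{-1}u=\tfrac{1}{2}P_{\ker(1+\partial)}(1-\partial)u$ and $\pi_{1}u=\tfrac{1}{2}P_{\ker(1-\partial)}(1+\partial)u$, and then evaluate these as rank-one $L_{2}$-projections onto the spans of $t\mapsto\e^{\mp t}$. The paper does not use \prettyref{lem:projections} here at all: it characterises $\pi_{-1}u=(t\mapsto c\e^{-t})$ directly by the orthogonality condition $\langle t\mapsto\e^{-t},u-\pi_{-1}u\rangle_{H^{1}([a,b];\R)}=0$ in the graph inner product of $\dom(\partial)=H^{1}([a,b];\R)$, and solves for $c$. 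The computational core is identical in both versions, namely the integration-by-parts identity $\int_{a}^{b}\e^{-t}(u(t)-u'(t))\d t=u(a)\e^{-a}-u(b)\e^{-b}$ and its analogue for $\pi_{1}$; indeed, expanding the graph inner product $\langle t\mapsto\e^{-t},u\rangle_{H^{1}}$ reproduces exactly your $L_{2}$-pairing $\langle(1-\partial)u,t\mapsto\e^{-t}\rangle_{L_{2}}$, and the factor $\tfrac{1}{2}$ in \prettyref{lem:projections} compensates for the graph norm of $\e^{\mp t}$ being twice its $L_{2}$ norm squared. What your route buys is uniformity: it is the same reduction the paper itself uses later for block operator matrices (\prettyref{lem:ortho_projector_block_op}), and it cleanly separates the abstract projection formula from the one Sobolev-specific computation. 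The paper's route is more self-contained, requiring only the definition of $\pi_{-1}$ as an orthogonal projection in $\dom(\partial)$ together with the one-dimensionality of the kernel.
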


\begin{proof}
The asserted equality for the kernels is clear. Let now $u\in H^{1}([a,b];\R)$.
Then $\left(\pi_{-1}u\right)(t)=c\e^{-t}$ for some $c\in\R$ and
satisfies 
\[
\langle t\mapsto\e^{-t},u-\pi_{-1}u\rangle_{H^{1}([a,b];\R)}=0.
\]
The latter gives 
\begin{align*}
\int_{a}^{b}\e^{-t}(u(t)-u'(t))\d t & =\int_{a}^{b}\e^{-t}c\e^{-t}\d t+\int_{a}^{b}\e^{-t}c\e^{-t}\d t\\
 & =2c\int_{a}^{b}\e^{-2t}\d t=c\left(\e^{-2a}-\e^{-2b}\right).
\end{align*}
The integral on the left hand side gives 
\begin{align*}
\int_{a}^{b}\e^{-t}(u(t)-u'(t))\d t & =\int_{a}^{b}\e^{-t}u(t)\d t-\left(u(b)\e^{-b}-u(a)\e^{-a}+\int_{a}^{b}\e^{-t}u(t)\d t\right)\\
 & =u(a)\e^{-a}-u(b)\e^{-b}
\end{align*}
and hence, 
\[
c=\frac{u(a)\e^{-a}-u(b)\e^{-b}}{\e^{-2a}-\e^{-2b}}.
\]
The formula for $\pi_{1}u$ follows by analogous arguments. 
\end{proof}
Since $\ker(1-\partial)$ and $\ker(1+\partial)$ are both one-dimensional,
mappings $h:\ker(1-\partial)\to\ker(1+\partial)$ are induces by mappings
$g:\R\to\R$ via 
\begin{equation}
h(t\mapsto c\e^{t})=(t\mapsto g(c)\e^{-t}).\label{eq:one_dimensional_h}
\end{equation}

\begin{lem}
\label{lem:boundary_function_derivaive}A function $h:\ker(1-\partial)\to\ker(1+\partial)$
satisfies 
\[
\forall x,y\in\ker(1-\partial):\,\|h(x)-h(y)\|_{L_{2}([a,b])}\leq\|x-y\|_{L_{2}([a,b])},
\]
if and only if 
\[
\forall c,d\in\R:\,|g(c)-g(d)|\leq\e^{a+b}|c-d|,
\]
where $h$ and $g$ are linked via \prettyref{eq:one_dimensional_h}.
\end{lem}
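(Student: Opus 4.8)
The plan is to reduce the norm-contraction condition on $h$ to the scalar inequality on $g$ by a direct computation, exploiting that both deficiency spaces are one-dimensional. First I would parametrise the elements: writing $x=(t\mapsto c\e^{t})$ and $y=(t\mapsto d\e^{t})$ for arbitrary $c,d\in\R$, every pair of elements of $\ker(1-\partial)$ arises in this way, and by \prettyref{eq:one_dimensional_h} we have $h(x)=(t\mapsto g(c)\e^{-t})$ and $h(y)=(t\mapsto g(d)\e^{-t})$. Hence the two quantities appearing in the contractivity condition are norms of scalar multiples of the fixed functions $\e^{t}$ and $\e^{-t}$.

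Next I would evaluate the two $L_{2}([a,b])$-norms explicitly using the elementary integrals $\int_{a}^{b}\e^{2t}\d t=\tfrac{1}{2}(\e^{2b}-\e^{2a})$ and $\int_{a}^{b}\e^{-2t}\d t=\tfrac{1}{2}(\e^{-2a}-\e^{-2b})$. This yields $\|x-y\|_{L_{2}([a,b])}^{2}=\tfrac{1}{2}(c-d)^{2}(\e^{2b}-\e^{2a})$ and $\|h(x)-h(y)\|_{L_{2}([a,b])}^{2}=\tfrac{1}{2}(g(c)-g(d))^{2}(\e^{-2a}-\e^{-2b})$. Consequently, the contractivity of $h$ is equivalent to the requirement that $(g(c)-g(d))^{2}(\e^{-2a}-\e^{-2b})\leq(c-d)^{2}(\e^{2b}-\e^{2a})$ hold for all $c,d\in\R$.

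The one genuine observation is the algebraic identity $\e^{-2a}-\e^{-2b}=(\e^{2b}-\e^{2a})\e^{-2(a+b)}$, obtained simply by factoring $\e^{-2(a+b)}$ out of the left-hand side. Substituting this and dividing through by the strictly positive factor $\e^{2b}-\e^{2a}$ (positive since $a<b$) collapses the inequality to $(g(c)-g(d))^{2}\leq\e^{2(a+b)}(c-d)^{2}$, equivalently $|g(c)-g(d)|\leq\e^{a+b}|c-d|$, which is exactly the asserted condition. Since each step in the chain is an equivalence, both implications follow simultaneously.

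I do not anticipate any real obstacle here: the argument is a short computation, and the only point requiring care is recognising the identity relating the two exponential integrals, which is precisely what produces the Lipschitz constant $\e^{a+b}$.
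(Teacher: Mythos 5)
Your proposal is correct and follows essentially the same route as the paper's proof: both parametrise the one-dimensional kernels, compute the two $L_{2}([a,b])$-norms via the elementary exponential integrals, and reduce the contraction condition to the scalar inequality through the identity $\frac{\e^{2b}-\e^{2a}}{\e^{-2a}-\e^{-2b}}=\e^{2(a+b)}$. No gaps; nothing further is needed.
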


\begin{proof}
The proof is straight forward. For $x,y\in\ker(1-\partial)$ we find
$c,d\in\R$ such that $x=(t\mapsto c\e^{t}),y=(t\mapsto d\e^{t}).$
We then compute 
\begin{align*}
\|h(x)-h(y)\|_{L_{2}([a,b])}^{2} & =\int_{a}^{b}|g(c)-g(d)|^{2}\e^{-2x}\d x\\
 & =|g(c)-g(d)|^{2}\frac{1}{2}(\e^{-2a}-\e^{-2b})
\end{align*}
and on the other hand 
\[
\|x-y\|_{L_{2}([a,b])}^{2}=\int_{a}^{b}|c-d|^{2}\e^{2x}\d x=|c-d|^{2}\frac{1}{2}(\e^{2b}-\e^{2a})
\]
and thus, the assertion follows with the help of the equality
\[
\frac{\e^{2b}-\e^{2a}}{\e^{-2a}-\e^{-2b}}=\e^{2(a+b)}.\tag*{\qedhere}
\]
 
\end{proof}
\begin{thm}
Let $B\subseteq\partial.$ Then $B$ is $m$-accretive if and only
if there exists a function $g:\R\to\R$ with 
\[
\forall c,d\in\R:\,|g(c)-g(d)|\leq\e^{a+b}|c-d|
\]
 such that 
\[
\dom(B)=\{u\in H^{1}([a,b];\R)\,;\,\tfrac{u(a)\e^{-a}-u(b)\e^{-b}}{\e^{-2a}-\e^{-2b}}=g\left(\tfrac{u(b)\e^{b}-u(a)\e^{a}}{\e^{2b}-\e^{2a}}\right)\}.
\]
\end{thm}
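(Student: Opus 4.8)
The plan is to derive this concrete characterisation directly from \prettyref{thm:main} by unwinding the definitions of the projections $\pi_{\pm 1}$ computed in \prettyref{lem:projector_derivative} and translating the abstract contractivity condition via \prettyref{lem:boundary_function_derivaive}. The key observation is that here $A_0 = \partial_0$ and $A = -A_0^\ast = \partial$, so \prettyref{thm:main} applies verbatim once we identify the one-dimensional kernels and the abstract contraction $h$ with its scalar avatar $g$.

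First I would apply \prettyref{thm:main} to $B \subseteq \partial$: it is $m$-accretive if and only if there exists a contraction $h\colon \ker(1-\partial)\to\ker(1+\partial)$ (with respect to the $L_2$-norm) such that
\[
\dom(B)=\{u\in H^1([a,b];\R)\,;\,h(\pi_1 u)=\pi_{-1}u\}.
\]
Since both kernels are one-dimensional by \prettyref{lem:projector_derivative}, every such $h$ is linked to a unique $g\colon\R\to\R$ via \prettyref{eq:one_dimensional_h}, and \prettyref{lem:boundary_function_derivaive} shows that the $L_2$-contractivity of $h$ is equivalent to the Lipschitz bound $|g(c)-g(d)|\le\e^{a+b}|c-d|$. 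This establishes the equivalence between the existence of the abstract $h$ and the existence of a $g$ with the stated Lipschitz constant.

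Next I would rewrite the boundary condition $h(\pi_1 u)=\pi_{-1}u$ explicitly. By \prettyref{lem:projector_derivative}, $\pi_1 u = (t\mapsto c\,\e^{t})$ with $c = \tfrac{u(b)\e^{b}-u(a)\e^{a}}{\e^{2b}-\e^{2a}}$, so by \prettyref{eq:one_dimensional_h} we have $h(\pi_1 u) = (t\mapsto g(c)\,\e^{-t})$. On the other hand $\pi_{-1}u = (t\mapsto \tilde c\,\e^{-t})$ with $\tilde c = \tfrac{u(a)\e^{-a}-u(b)\e^{-b}}{\e^{-2a}-\e^{-2b}}$. Since the functions $t\mapsto\e^{-t}$ are linearly independent from zero, the equality $h(\pi_1 u)=\pi_{-1}u$ reduces precisely to the scalar condition $g(c)=\tilde c$, which is exactly the equation displayed in the theorem. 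Assembling these two translations gives both directions of the claimed equivalence.

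I expect the reasoning to be essentially bookkeeping, since all the analytic content is already packaged in the three cited results; the only mild care needed is to confirm that the correspondence $h\leftrightarrow g$ via \prettyref{eq:one_dimensional_h} is a genuine bijection between contractions and Lipschitz functions, so that the existential quantifiers match up cleanly in both directions. There is no real obstacle: unlike the proof of \prettyref{thm:main}, no surjectivity or maximality argument is needed here, because those were absorbed into the abstract theorem.
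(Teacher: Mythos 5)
Your proposal is correct and follows exactly the paper's route: the paper's own proof is precisely the one-line observation that the theorem is a direct application of \prettyref{thm:main} combined with \prettyref{lem:projector_derivative} and \prettyref{lem:boundary_function_derivaive}, which is what you carry out in detail. Your explicit unwinding of $h(\pi_{1}u)=\pi_{-1}u$ into the scalar condition $g(c)=\tilde{c}$, and your remark that \prettyref{eq:one_dimensional_h} gives a bijection between contractions $h$ and functions $g$ with Lipschitz constant $\e^{a+b}$, are exactly the bookkeeping the paper leaves implicit.
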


\begin{proof}
The statement is a direct application of \prettyref{thm:main} in
combination with \prettyref{lem:projector_derivative} and \prettyref{lem:boundary_function_derivaive}.
\end{proof}
\begin{rem}
\begin{enumerate}[(a)]

\item We note that in the symmetric case when $a=-b$ with $b>0,$
we have that 
\[
\e^{a+b}=1
\]
and thus, $g$ has to be a contraction.

\item Note that in the linear case, we can write the boundary condition
as 
\begin{equation}
\left(\frac{\e^{-b}}{\e^{-2a}-\e^{-2b}}+g\frac{\e^{b}}{\e^{2b}-\e^{2a}}\right)u(b)=\left(\frac{\e^{-a}}{\e^{-2a}-\e^{-2b}}+g\frac{\e^{a}}{\e^{2b}-\e^{2a}}\right)u(a)\label{eq:crazy_bd}
\end{equation}
for a real number $g\in\R$ with $|g|\leq\e^{a+b}.$ Note that 
\[
\frac{\e^{-a}}{\e^{-2a}-\e^{-2b}}+g\frac{\e^{a}}{\e^{2b}-\e^{2a}}=\frac{\e^{a}}{\e^{2b}-\e^{2a}}\left(\e^{2b}+g\right)\ne0
\]
for $|g|\leq\e^{a+b}<\e^{2b}$ and thus, the boundary condition can
be written as 
\[
\left(\frac{\e^{-b}}{\e^{-2a}-\e^{-2b}}+g\frac{\e^{b}}{\e^{2b}-\e^{2a}}\right)\left(\frac{\e^{-a}}{\e^{-2a}-\e^{-2b}}+g\frac{\e^{a}}{\e^{2b}-\e^{2a}}\right)^{-1}u(b)=u(a).
\]
Note that the term on the left hand side can be rewritten as 
\[
\left(\e^{b-a}\frac{\e^{2a}+g}{\e^{2b}+g}\right)u(b)=u(a).
\]
Using that the mapping 
\[
[-\e^{a+b},\e^{a+b}]\ni g\mapsto\e^{b-a}\frac{\e^{2a}+g}{\e^{2b}+g}\in[-1,1]
\]
is bijective, we obtain that \prettyref{eq:crazy_bd} is equivalent
to 
\[
cu(b)=u(a)
\]
for some $|c|\leq1,$ which recovers the well-known characterisation
of linear boundary conditions for $m$-accretive realisations of the
derivative, see e.g. \cite[Theorem 7.2.4]{Jacob_Zwart2012} or \cite[Theorem 2.8]{PTWW2022}.
It is noteworthy that in the nonlinear situation, we cannot expect
to formulate all boundary conditions as $u(a)=f(u(b))$ for a suitable
$f$. 

\end{enumerate}
\end{rem}

\subsection{Block-Operator Matrices}

In this section we inspect the following setting. We assume that $H=H_{0}\oplus H_{1}$
for two Hilbert spaces $H_{0},H_{1}$. Moreover, let $G_{0}\colon\dom(G_{0})\subseteq H_{0}\to H_{1}$
and $D_{0}\colon\dom(D_{0})\subseteq H_{1}\to H_{0}$ be two closed,
densely defined linear operators such that 
\[
G\coloneqq-D_{0}^{\ast}\supseteq G_{0}\text{ and }D\coloneqq-G_{0}^{\ast}\supseteq D_{0}.
\]
We consider the following skew-symmetric operator on $H$
\[
A_{0}\coloneqq\left(\begin{array}{cc}
0 & D_{0}\\
G_{0} & 0
\end{array}\right)
\]
together with its negative adjoint 
\begin{equation}
A=-A_{0}^{\ast}=\left(\begin{array}{cc}
0 & D\\
G & 0
\end{array}\right).\label{eq:block_op}
\end{equation}

\begin{rem}
The prototype for those operators $G_{0}$ and $D_{0}$ are the gradient
and the divergence with vanishing boundary values. More precisely,
let $\Omega\subseteq\R^{n}$ open and set
\begin{align*}
\grad_{0}\colon H_{0}^{1}(\Omega)\subseteq L_{2}(\Omega)\to L_{2}(\Omega)^{n}, & \quad f\mapsto\left(\partial_{j}f\right)_{j\in\{1,\ldots,n\}},\\
\dive_{0}\coloneqq\dom(\dive_{0})\subseteq L_{2}(\Omega)^{n}\to L_{2}(\Omega), & \quad\Phi\mapsto\sum_{j=1}^{n}\partial_{j}\Phi_{j},
\end{align*}
where 
\[
\dom(\dive_{0})=\overline{C_{c}^{\infty}(\Omega)^{n}}^{\dom(\dive)}
\]
and $\dive\coloneqq-\grad_{0}^{\ast}$, $\grad\coloneqq-\dive_{0}^{\ast}$
are the usual distributional divergence and gradient on $L_{2}(\Omega)$.
In this situation we have 
\[
H_{0}=L_{2}(\Omega),\quad H_{1}=L_{2}(\Omega)^{n},\quad G_{0}=\grad_{0},\quad D_{0}=\dive_{0},\quad G=\grad,\quad D=\dive
\]
and the resulting block operator matrix is given by 
\[
A=\left(\begin{array}{cc}
0 & \dive\\
\grad & 0
\end{array}\right).
\]
Those operators naturally occur in first-order formulations of classical
partial differential equations in mathematical physics, in particular
for the wave and heat equation. We emphasise that the same construction
works for the rotation, yielding the block operator matrix 
\[
A=\left(\begin{array}{cc}
0 & -\curl\\
\curl & 0
\end{array}\right)
\]
and similar for the gradient and divergence on higher-order tensor
fields, allowing to treat Maxwell's equations, the equation of elasticity
and coupled problems thereof. For more details we refer to \cite{Picard2009,PMTW2020,PTW2015,PSTW2016,STW2022}.
\end{rem}

Following \cite{PTW2016}, we introduce the following spaces.
\begin{defn*}
For operators $G$ and $D$ as above we set
\begin{align*}
\mathcal{BD}(G) & \coloneqq\{u\in\dom(DG)\,;\,DGu=u\},\\
\mathcal{BD}(D) & \coloneqq\{v\in\dom(GD)\,;\,GDv=v\}.
\end{align*}
Then $\mathcal{BD}(G)=\dom(G_{0})^{\bot_{\dom(G)}}$ and $\mathcal{BD}(D)=\dom(D_{0})^{\bot_{\dom(D)}}$
and hence, both spaces are closed subspaces of $\dom(G)$ and $\dom(D)$,
respectively. Moreover, $G_{\mathcal{BD}}:\mathcal{BD}(G)\to\mathcal{BD}(D)$
and $D_{\mathcal{BD}}:\mathcal{BD}(D)\to\mathcal{BD}(G)$ with $G_{\mathcal{BD}}u=Gu$
and $D_{\mathcal{BD}}v=Dv$ are unitary with $D_{\mathcal{BD}}^{\ast}=G_{\mathcal{BD}}.$
For $u\in\dom(G)$ and $v\in\dom(D)$ we denote the orthogonal projections
on $\mathcal{BD}(G)$ and $\mathcal{BD}(D)$ by $u_{\mathcal{BD}}$
and $v_{\mathcal{BD}}$, respectively.
\end{defn*}
We begin to compute the projections onto $\ker(1\pm A).$
\begin{lem}
\label{lem:ortho_projector_block_op}Let $A$ be as in \prettyref{eq:block_op}.
For $(u,v)\in\dom(A)$ we have 
\[
\pi_{1}\left(\begin{array}{c}
u\\
v
\end{array}\right)=\frac{1}{2}\left(\begin{array}{c}
u_{\mathcal{BD}}+Dv_{\mathcal{BD}}\\
Gu_{\mathcal{BD}}+v_{\mathcal{BD}}
\end{array}\right),\quad\pi_{-1}\left(\begin{array}{c}
u\\
v
\end{array}\right)=\frac{1}{2}\left(\begin{array}{c}
u_{\mathcal{BD}}-Dv_{\mathcal{BD}}\\
-Gu_{\mathcal{BD}}+v_{\mathcal{BD}}
\end{array}\right).
\]
\end{lem}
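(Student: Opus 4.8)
The plan is to bypass a direct evaluation of the formula in \prettyref{lem:projections} and instead exploit the \emph{uniqueness} of the orthogonal decomposition recorded in \prettyref{lem:decomp_domain}. Since
\[
\dom(A)=\dom(A_{0})\oplus_{A}\ker(1-A)\oplus_{A}\ker(1+A)
\]
is a direct sum, any representation of a given $(u,v)\in\dom(A)$ as a sum of three vectors lying in $\dom(A_{0})$, $\ker(1-A)$ and $\ker(1+A)$ respectively must agree with the decomposition induced by $\pi_{0},\pi_{1},\pi_{-1}$. Hence it suffices to verify that the two right-hand sides in the claim lie in $\ker(1-A)$ and $\ker(1+A)$ respectively, and that the remainder lies in $\dom(A_{0})$.

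First I would record the defining identities of the boundary data spaces. Since $u_{\mathcal{BD}}\in\mathcal{BD}(G)$ and $v_{\mathcal{BD}}\in\mathcal{BD}(D)$, the definition gives $DGu_{\mathcal{BD}}=u_{\mathcal{BD}}$ and $GDv_{\mathcal{BD}}=v_{\mathcal{BD}}$, while the mapping properties of the unitaries yield $Gu_{\mathcal{BD}}=G_{\mathcal{BD}}u_{\mathcal{BD}}\in\mathcal{BD}(D)\subseteq\dom(D)$ and $Dv_{\mathcal{BD}}=D_{\mathcal{BD}}v_{\mathcal{BD}}\in\mathcal{BD}(G)\subseteq\dom(G)$. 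These already place both proposed vectors in $\dom(A)$ (recall that $(w,z)\in\dom(A)$ means precisely $w\in\dom(G)$ and $z\in\dom(D)$). Applying $1-A$ to $\tfrac{1}{2}(u_{\mathcal{BD}}+Dv_{\mathcal{BD}},\,Gu_{\mathcal{BD}}+v_{\mathcal{BD}})$, the first component becomes $\tfrac12(u_{\mathcal{BD}}-DGu_{\mathcal{BD}})$ and the second $\tfrac12(v_{\mathcal{BD}}-GDv_{\mathcal{BD}})$, both of which vanish by the identities above; thus this vector lies in $\ker(1-A)$. The entirely analogous computation with $1+A$ places $\tfrac{1}{2}(u_{\mathcal{BD}}-Dv_{\mathcal{BD}},\,-Gu_{\mathcal{BD}}+v_{\mathcal{BD}})$ in $\ker(1+A)$.

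Next I would add the two proposed vectors: the off-diagonal terms $\pm Dv_{\mathcal{BD}}$ and $\pm Gu_{\mathcal{BD}}$ cancel, leaving $(u_{\mathcal{BD}},v_{\mathcal{BD}})$, so the remainder is $(u-u_{\mathcal{BD}},\,v-v_{\mathcal{BD}})$. By the identities $\mathcal{BD}(G)=\dom(G_{0})^{\bot_{\dom(G)}}$ and $\mathcal{BD}(D)=\dom(D_{0})^{\bot_{\dom(D)}}$ from the definition, $u-u_{\mathcal{BD}}$ is the $\dom(G)$-orthogonal projection of $u$ onto $\dom(G_{0})$, and likewise $v-v_{\mathcal{BD}}\in\dom(D_{0})$; hence the remainder lies in $\dom(G_{0})\times\dom(D_{0})=\dom(A_{0})$. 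This exhibits the required three-term decomposition of $(u,v)$, and uniqueness in \prettyref{lem:decomp_domain} identifies the two proposed vectors as $\pi_{1}(u,v)$ and $\pi_{-1}(u,v)$, as claimed.

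The one step that genuinely needs care, and where an oversight is most likely, is the domain bookkeeping: one must confirm at each stage that $Dv_{\mathcal{BD}}$ and $Gu_{\mathcal{BD}}$ indeed land in the correct operator domains so that $1\mp A$ may legitimately be applied and so that $DGu_{\mathcal{BD}}$, $GDv_{\mathcal{BD}}$ make sense. This is precisely what the unitarity of $G_{\mathcal{BD}}:\mathcal{BD}(G)\to\mathcal{BD}(D)$ and $D_{\mathcal{BD}}:\mathcal{BD}(D)\to\mathcal{BD}(G)$ supplies. A more computational alternative would feed $(1+A)(u,v)=(u+Dv,\,Gu+v)$ into \prettyref{lem:projections} and evaluate $P_{\ker(1-A)}$ directly, but this first requires describing $\ker(1\pm A)=\{(w,\pm G_{\mathcal{BD}}w)\,;\,w\in\mathcal{BD}(G)\}$ together with its $H$-orthogonal projection, which is more laborious than the uniqueness argument sketched above.
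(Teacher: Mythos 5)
Your proposal is correct, and it takes a genuinely different route from the paper's own proof. The paper argues exactly along the ``more computational alternative'' you mention at the end: it invokes \prettyref{lem:projections} to write $\pi_{1}(u,v)=\frac{1}{2}P_{\ker(1-A)}\left((1+A)(u,v)\right)$, notes that $(x,y)\in\ker(1-A)$ forces $x=Dy$ and $y=Gx$, hence $x\in\mathcal{BD}(G)$, $y\in\mathcal{BD}(D)$, and then checks by an $H$-inner-product computation that $(1+A)(u,v)-(u_{\mathcal{BD}}+Dv_{\mathcal{BD}},\,Gu_{\mathcal{BD}}+v_{\mathcal{BD}})$ is $H$-orthogonal to $\ker(1-A)$. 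Your argument replaces all of this by the uniqueness of the three-fold orthogonal decomposition in \prettyref{lem:decomp_domain}: the two candidate vectors lie in $\ker(1\mp A)$ by the purely algebraic identities $DGu_{\mathcal{BD}}=u_{\mathcal{BD}}$ and $GDv_{\mathcal{BD}}=v_{\mathcal{BD}}$, and the remainder is $(u-u_{\mathcal{BD}},v-v_{\mathcal{BD}})\in\dom(G_{0})\times\dom(D_{0})=\dom(A_{0})$. Your route is shorter, avoids inner-product computations entirely, and delivers $\pi_{0}(u,v)=(u-u_{\mathcal{BD}},v-v_{\mathcal{BD}})$ as a by-product; the paper's computation, on the other hand, establishes along the way the Green-type identity $\langle(1+A)(u,v),(x,y)\rangle_{H}=\langle u_{\mathcal{BD}},x\rangle_{\mathcal{BD}(G)}+\langle v_{\mathcal{BD}},y\rangle_{\mathcal{BD}(D)}$, a pattern that reappears in the trace-space discussion of the final section. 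One small point you should state explicitly: deducing $u-u_{\mathcal{BD}}\in\dom(G_{0})$ from $\mathcal{BD}(G)=\dom(G_{0})^{\bot_{\dom(G)}}$ uses $\mathcal{BD}(G)^{\bot_{\dom(G)}}=\left(\dom(G_{0})^{\bot_{\dom(G)}}\right)^{\bot_{\dom(G)}}=\dom(G_{0})$, which holds because $\dom(G_{0})$ is a closed subspace of $\dom(G)$ (as $G_{0}$ is a closed operator extended by $G$); with that one line added, your proof is complete.
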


\begin{proof}
Let $(u,v)\in\dom(A)$; that is, $u\in\dom(G)$ and $v\in\dom(D).$
Then we can decompose $u=u_{0}+u_{\mathcal{BD}}$ and $v=v_{0}+v_{\mathcal{BD}}$
where $u_{0}\in\dom(G_{0})$ and $v_{0}\in\dom(D_{0})$ and $u_{0}\bot u_{\mathcal{BD}}$
and $v_{0}\bot v_{\mathcal{BD}}$ in $\dom(G)$ and $\dom(D)$ respectively.
We recall from \prettyref{lem:projections}
\[
\pi_{1}\left(\begin{array}{c}
u\\
v
\end{array}\right)=\frac{1}{2}P_{\ker(1-A)}\left(\left(1+\left(\begin{array}{cc}
0 & D\\
G & 0
\end{array}\right)\right)\left(\begin{array}{c}
u\\
v
\end{array}\right)\right).
\]
We compute for $(x,y)\in\ker(1-A)$
\begin{align*}
\langle u+Dv,x\rangle_{H_{0}}+\langle v+Gu,y\rangle_{H_{1}} & =\langle u_{0},x\rangle_{H_{0}}+\langle u_{\mathcal{BD}},x\rangle_{H_{0}}+\langle D_{0}v_{0},x\rangle_{H_{0}}+\langle Dv_{\mathcal{BD}},x\rangle_{H_{0}}\\
 & \quad+\langle v_{0},y\rangle_{H_{1}}+\langle v_{\mathcal{BD}},y\rangle_{H_{1}}+\langle G_{0}u_{0},y\rangle_{H_{1}}+\langle Gu_{\mathcal{BD}},y\rangle_{H_{1}}\\
 & =\langle u_{0},x\rangle_{H_{0}}+\langle u_{\mathcal{BD}},x\rangle_{H_{0}}-\langle v_{0},Gx\rangle_{H_{1}}+\langle Dv_{\mathcal{BD}},x\rangle_{H_{0}}\\
 & \quad+\langle v_{0},y\rangle_{H_{1}}+\langle v_{\mathcal{BD}},y\rangle_{H_{1}}-\langle u_{0},Dy\rangle_{H_{0}}+\langle Gu_{\mathcal{BD}},y\rangle_{H_{1}}.
\end{align*}
Using $\left(\begin{array}{c}
x\\
y
\end{array}\right)=A\left(\begin{array}{c}
x\\
y
\end{array}\right)=\left(\begin{array}{c}
Dy\\
Gx
\end{array}\right),$ we derive $x=Dy$ and $y=Gx$ and in particular $x\in\mathcal{BD}(G)$
and $y\in\mathcal{BD}(D)$. The latter gives 
\begin{align*}
\langle u+Dv,x\rangle_{H_{0}}+\langle v+Gu,y\rangle_{H_{1}} & =\langle u_{\mathcal{BD}},x\rangle_{H_{0}}+\langle Dv_{\mathcal{BD}},x\rangle_{H_{0}}+\langle v_{\mathcal{BD}},y\rangle_{H_{1}}+\langle Gu_{\mathcal{BD}},y\rangle_{H_{1}}\\
 & =\langle u_{\mathcal{BD}},x\rangle_{H_{0}}+\langle Dv_{\mathcal{BD}},Dy\rangle_{H_{0}}+\langle v_{\mathcal{BD}},y\rangle_{H_{1}}+\langle Gu_{\mathcal{BD}},Gx\rangle_{H_{1}}\\
 & =\langle u_{\mathcal{BD}},x\rangle_{\mathcal{BD}(G)}+\langle v_{\mathcal{BD}},y\rangle_{\mathcal{BD}(D)}.
\end{align*}
Hence, 
\begin{align*}
 & \left\langle \left(1+\left(\begin{array}{cc}
0 & D\\
G & 0
\end{array}\right)\right)\left(\begin{array}{c}
u\\
v
\end{array}\right)-\left(\begin{array}{c}
u_{\mathcal{BD}}+D_{\mathcal{BD}}v_{\mathcal{BD}}\\
G_{\mathcal{BD}}u_{\mathcal{BD}}+v_{\mathcal{BD}}
\end{array}\right),\left(\begin{array}{c}
x\\
y
\end{array}\right)\right\rangle _{H_{0}\times H_{1}}\\
 & =\langle u_{\mathcal{BD}},x\rangle_{\mathcal{BD}(G)}+\langle v_{\mathcal{BD}},y\rangle_{\mathcal{BD}(D)}\\
 & \quad-\langle u_{\mathcal{BD}}+Dv_{\mathcal{BD}},x\rangle_{H_{0}}-\langle Gu_{\mathcal{BD}}+v_{\mathcal{BD}},y\rangle_{H_{1}}\\
 & =\langle Gu_{\mathcal{BD}},Gx\rangle_{H_{1}}+\langle Dv_{\mathcal{BD}},Dy\rangle_{H_{0}}-\langle Dv_{\mathcal{BD}},x\rangle_{H_{0}}-\langle Gu_{\mathcal{BD}},y\rangle_{H_{1}}\\
 & =0
\end{align*}
and using that $\left(\begin{array}{c}
u_{\mathcal{BD}}+Dv_{\mathcal{BD}}\\
Gu_{\mathcal{BD}}+v_{\mathcal{BD}}
\end{array}\right)\in\ker(1-A),$ we infer 
\[
\pi_{1}\left(\begin{array}{c}
u\\
v
\end{array}\right)=\frac{1}{2}P_{\ker(1-A)}\left(\left(1+\left(\begin{array}{cc}
0 & D\\
G & 0
\end{array}\right)\right)\left(\begin{array}{c}
u\\
v
\end{array}\right)\right)=\frac{1}{2}\left(\begin{array}{c}
u_{\mathcal{BD}}+Dv_{\mathcal{BD}}\\
Gu_{\mathcal{BD}}+v_{\mathcal{BD}}
\end{array}\right).
\]
The second formula follows by replacing $G$ and $D$ by $-G$ and
$-D$, respectively.
\end{proof}
\begin{cor}
\label{cor:reduce_h}Let $A$ be as in \prettyref{eq:block_op}. Then
each mapping $h:\ker(1-A)\to\ker(1+A)$ is uniquely determined by
a mapping $f:\mathcal{BD}(G)\to\mathcal{BD}(G)$ via
\[
h(u,v)=(f(u),-Gf(u))\quad((u,v)\in\ker(1-A)).
\]
Moreover, 
\[
|h|_{\mathrm{Lip}}=|f|_{\mathrm{Lip}},
\]
where the Lipschitz-seminorm of $h$ is computed in $H=H_{0}\times H_{1}$
and the Lipschitz-seminorm of $f$ is computed in $\mathcal{BD}(G).$
\end{cor}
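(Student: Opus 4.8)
The plan is to realise both kernels as isometric copies of the boundary data space $\mathcal{BD}(G)$ and then to read off the Lipschitz-seminorm identity directly from these isometries. First I would parametrise $\ker(1\pm A)$. From the proof of \prettyref{lem:ortho_projector_block_op}, any $(x,y)\in\ker(1-A)$ satisfies $x=Dy$ and $y=Gx$, whence $x\in\mathcal{BD}(G)$ and $y=G_{\mathcal{BD}}x$; conversely, for every $u\in\mathcal{BD}(G)$ one checks $A(u,Gu)=(DGu,Gu)=(u,Gu)$, so $(u,Gu)\in\ker(1-A)$. Hence $\ker(1-A)=\{(u,Gu)\,;\,u\in\mathcal{BD}(G)\}$. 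Replacing $G,D$ by $-G,-D$, as in the lemma, gives $\ker(1+A)=\{(w,-Gw)\,;\,w\in\mathcal{BD}(G)\}$: indeed $A(w,-Gw)=(-DGw,Gw)=-(w,-Gw)$ for $w\in\mathcal{BD}(G)$, and the reverse inclusion follows from the defining relations of $\ker(1+A)$.

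Next I would define $f$ and verify the asserted form of $h$. Since $h(u,Gu)\in\ker(1+A)$, the parametrisation above shows that its first component lies in $\mathcal{BD}(G)$ and already determines the second; setting $f(u)$ to be this first component yields $h(u,Gu)=(f(u),-Gf(u))$ with $f\colon\mathcal{BD}(G)\to\mathcal{BD}(G)$. Conversely, any such $f$ defines an $h$ by the same formula, and the two assignments are mutually inverse, so the correspondence $h\leftrightarrow f$ is a bijection; in particular $f$ is uniquely determined by $h$.

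The key observation for the norm identity is that on these graphs the $H=H_{0}\times H_{1}$ norm coincides with the graph norm of $\mathcal{BD}(G)\subseteq\dom(G)$: for $u\in\mathcal{BD}(G)$ one has $\|(u,\pm Gu)\|_{H}^{2}=\|u\|_{H_{0}}^{2}+\|Gu\|_{H_{1}}^{2}=\|u\|_{\mathcal{BD}(G)}^{2}$. Thus $u\mapsto(u,Gu)$ and $w\mapsto(w,-Gw)$ are isometries from $(\mathcal{BD}(G),\|\cdot\|_{\dom(G)})$ onto $\ker(1-A)$ and $\ker(1+A)$, respectively. Applying these isometries to the difference quotients defining $|h|_{\mathrm{Lip}}$, and using that $f(u_{1})-f(u_{2})\in\mathcal{BD}(G)$ together with the linearity of $G$ on $\mathcal{BD}(G)$, the quotient $\|h(u_{1},Gu_{1})-h(u_{2},Gu_{2})\|_{H}/\|(u_{1},Gu_{1})-(u_{2},Gu_{2})\|_{H}$ equals $\|f(u_{1})-f(u_{2})\|_{\mathcal{BD}(G)}/\|u_{1}-u_{2}\|_{\mathcal{BD}(G)}$, and taking the supremum yields $|h|_{\mathrm{Lip}}=|f|_{\mathrm{Lip}}$. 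The only real care needed is the bookkeeping between the two isometric copies of $\mathcal{BD}(G)$; there is no genuine analytic obstacle once the graph-norm identity above has been recorded.
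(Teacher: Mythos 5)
Your proposal is correct and follows essentially the same route as the paper: both rest on the parametrisations $\ker(1-A)=\{(u,Gu)\,;\,u\in\mathcal{BD}(G)\}$, $\ker(1+A)=\{(w,-Gw)\,;\,w\in\mathcal{BD}(G)\}$, the definition $f(u)\coloneqq h_{1}(u,Gu)$, and the graph-norm identity $\|(u,\pm Gu)\|_{H_{0}\times H_{1}}=\|u\|_{\mathcal{BD}(G)}$. The only cosmetic difference is that you verify the kernel parametrisations directly, whereas the paper extracts them from \prettyref{lem:ortho_projector_block_op}.
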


\begin{proof}
If $h(u,v)=(h_{1}(u,v),h_{2}(u,v))$ is given, then define $f(u)\coloneqq h_{1}(u,Gu)$
for each $u\in\mathcal{BD}(G).$ Note that $f$ is well-defined, since
$(u,Gu)\in\ker(1-A)$ for each $u\in\mathcal{BD}(G)$. Moreover, 
\[
h(u,v)=h(u,Gu)=(f(u),-Gf(u))\quad((u,v)\in\ker(1-A)),
\]
since $h$ attains values in $\ker(1+A)$ and thus, the second coordinate
of $h(u,v)$ is given by $-Gh_{1}(u,v)$ according to \prettyref{lem:ortho_projector_block_op}.
Similarly, if $f$ is given, we set $h:\ker(1-A)\to\ker(1+A)$ by
$h(u,v)\coloneqq(f(u),-Gf(u))$, which is well-defined by \prettyref{lem:ortho_projector_block_op}.
Finally, we observe that for $(u,v)\in\ker(1\pm A)$ we have 
\[
\left\Vert \left(\begin{array}{c}
u\\
v
\end{array}\right)\right\Vert _{H_{0}\times H_{1}}^{2}=\|u\|_{H_{0}}^{2}+\|v\|_{H_{1}}^{2}=\|u\|_{H_{0}}^{2}+\|Gu\|_{H_{1}}^{2}=\|u\|_{\mathcal{BD}(G)}^{2},
\]
from which we derive the last assertion. 
\end{proof}
Next, we provide another characterisation of $m$-accretive relations
on a Hilbert space. 
\begin{lem}
\label{lem:relation}Let $M\subseteq H\times H$ for some Hilbert
space $H$. Then $M$ is $m$-accretive, if and only if there exists
$f:H\to H$ Lipschitz-continuous with $|f|_{\mathrm{Lip}}\leq1$ such
that 
\[
M=2(1+f)^{-1}-1.
\]
In particular 
\[
v=f(u)\Leftrightarrow(u+v,u-v)\in M\quad(u,v\in H).
\]
\end{lem}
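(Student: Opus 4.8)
The plan is to recognise the assignment $(u,v)\mapsto(u+v,u-v)$ as a rescaled Cayley-type transform that interchanges the contraction condition on $f$ with the accretivity condition on $M$, so that the whole lemma reduces to a single algebraic identity together with a bookkeeping of ranges. The engine of the argument is the computation that, for $u,v,u',v'\in H$ and with $a=u+v$, $b=u-v$, $a'=u'+v'$, $b'=u'-v'$,
\[
\langle b-b',a-a'\rangle_{H}=\|u-u'\|_{H}^{2}-\|v-v'\|_{H}^{2}.
\]
The left-hand side is exactly the accretivity pairing for $M$ evaluated on the pairs $(a,b),(a',b')$, while the right-hand side is non-negative precisely when $\|v-v'\|_{H}\le\|u-u'\|_{H}$, which is the contraction estimate for the assignment $u\mapsto v$. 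I would verify this identity first (in the real Hilbert space it is just $\langle p-q,p+q\rangle_{H}=\|p\|_{H}^{2}-\|q\|_{H}^{2}$ with $p=u-u'$, $q=v-v'$).

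For the direction from $f$ to $M$, suppose $f$ is Lipschitz with $|f|_{\mathrm{Lip}}\le1$ and set $M\coloneqq\{(u+f(u),u-f(u))\,;\,u\in H\}$, which is exactly $2(1+f)^{-1}-1$. Accretivity of $M$ is then immediate from the identity with $v=f(u)$, $v'=f(u')$, since $\|f(u)-f(u')\|_{H}\le\|u-u'\|_{H}$. For surjectivity of $1+M$, given $g\in H$ I would simply take $u\coloneqq g/2$; the element $(u+f(u),u-f(u))\in M$ has the sum of its two coordinates equal to $2u=g$, so $\ran(1+M)=H$. By the definition of $m$-accretivity (accretive together with $1+M$ onto), $M$ is $m$-accretive.

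For the converse, assume $M$ is $m$-accretive and apply the inverse transform $(a,b)\mapsto(\tfrac{1}{2}(a+b),\tfrac{1}{2}(a-b))$ to $M$, calling the image $\tilde{M}$. The identity shows that any two points $(u,v),(u',v')\in\tilde{M}$ satisfy $\|v-v'\|_{H}\le\|u-u'\|_{H}$; choosing $u=u'$ forces $v=v'$, so $\tilde{M}$ is single-valued and is therefore the graph of a function $f$ with $|f|_{\mathrm{Lip}}\le1$. The remaining point is that $\dom(f)=H$: the domain of $\tilde{M}$ is $\tfrac{1}{2}\ran(1+M)$, and $m$-accretivity gives $\ran(1+M)=H$, so $f$ is defined everywhere. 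Undoing the transform yields $M=\{(u+f(u),u-f(u))\,;\,u\in H\}=2(1+f)^{-1}-1$, and the displayed equivalence $v=f(u)\Leftrightarrow(u+v,u-v)\in M$ is precisely the definition of $f$.

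The main obstacle is not a genuine difficulty but a matter of care in treating $M$ as a relation rather than an operator: I must make sure that single-valuedness of $f$ is obtained \emph{for free} from the contraction estimate (the $u=u'$ case), and that surjectivity of $1+M$—the only place where $m$-accretivity is used beyond plain accretivity—is exactly what upgrades $\dom(f)$ from a mere subspace to all of $H$.
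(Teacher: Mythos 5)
Your proposal is correct and follows essentially the same route as the paper: the same Cayley-type transform $(u,v)\mapsto(u+v,u-v)$, the same polarisation identity $\langle p-q,p+q\rangle_{H}=\|p\|_{H}^{2}-\|q\|_{H}^{2}$ to exchange accretivity of $M$ with the contraction property of $f$, and the same use of surjectivity of $1+M$ (respectively the explicit choice $u=g/2$) to obtain $\dom(f)=H$ (respectively ontoness of $1+M$). The only differences are presentational: you phrase $f$ as the image of $M$ under the inverse transform rather than via the resolvent formula $f=\bigl(\tfrac{1}{2}(M+1)\bigr)^{-1}-1$, and you make explicit the single-valuedness step that the paper leaves implicit.
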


\begin{proof}
Assume first that $M$ is $m$-accretive and set 
\[
f\coloneqq\left(\frac{1}{2}(M+1)\right)^{-1}-1.
\]
Observe that for $u,v\in H$ we have that 
\[
(u,v)\in f\Leftrightarrow(u+v,u-v)\in M.
\]
Thus, for $(u,v),(x,y)\in f$ we estimate 
\begin{align*}
0 & \leq\langle(u+v)-(x+y),(u-v)-(x-y)\rangle\\
 & =\langle(u-x)+(v-y),(u-x)-(v-y)\rangle\\
 & =\|u-x\|^{2}-\|v-y\|^{2}
\end{align*}
and hence, 
\[
\|v-y\|\leq\|u-x\|.
\]
This proves that $f$ is a Lipschitz-continuous mapping with $|f|_{\mathrm{Lip}}\leq1.$
To prove that $\dom(f)=H,$ we observe that $u\in\dom(f)$ if and
only if $(u+f(u),u-f(u))\in M.$ The later is equivalent to 
\[
(u+f(u),2u)\in1+M
\]
and since $1+M$ is onto, the assertion follows. \\
If conversely $f$ is given, we set 
\[
M\coloneqq2(1+f)^{-1}-1.
\]
Then for $u,v\in H$ we have 
\[
(u,v)\in M\Leftrightarrow f\left(\frac{1}{2}(u+v)\right)=\frac{1}{2}(u-v).
\]
Hence, for $(u,v),(x,y)\in M$ we estimate 
\begin{align*}
 & \langle u-x,v-y\rangle\\
 & =\langle\tfrac{1}{2}(u+v)+\tfrac{1}{2}(u-v)-\tfrac{1}{2}(x+y)-\tfrac{1}{2}(x-y),\tfrac{1}{2}(u+v)-\tfrac{1}{2}(u-v)-\tfrac{1}{2}(x+y)+\tfrac{1}{2}(x-y)\rangle\\
 & =\tfrac{1}{4}\Re\langle\left((u+v)-(x+y)\right)+\left((u-v)-(x-y)\right),\left((u+v)-(x+y)\right)-\left((u-v)-(x-y)\right)\rangle\\
 & =\tfrac{1}{4}\|(u+v)-(x+y)\|^{2}-\tfrac{1}{4}\|(u-v)-(x-y)\|^{2}\\
 & =\|\tfrac{1}{2}(u+v)-\tfrac{1}{2}(x+y)\|^{2}-\|f\left(\tfrac{1}{2}(u+v)\right)-f\left(\tfrac{1}{2}(x+y)\right)\|^{2}\\
 & \geq0,
\end{align*}
and hence, $M$ is accretive. For showing $m$-accretivity, let $v\in H.$
We set $u\coloneqq f(\frac{1}{2}v)+\frac{1}{2}v.$ Then $(u,v)\in1+M$
since, 
\[
f\left(\frac{1}{2}\left(u+(v-u)\right)\right)=f\left(\frac{1}{2}v\right)=u-\frac{1}{2}v=\frac{1}{2}(u-(v-u)),
\]
which shows $(u,v-u)\in M.$ 
\end{proof}
\begin{thm}
\label{thm:char_max_mon_block}Let $C\subseteq A$ with $A$ as in
\prettyref{eq:block_op} Then the following statements are equivalent:

\begin{enumerate}[(i)]

\item $C$ is $m$-accretive,

\item there exists $h:\ker(1-A)\to\ker(1+A)$ with $|h|_{\mathrm{Lip}}\leq1$
such that 
\[
\dom(C)=\{x\in\dom(A)\,;\,h(\pi_{1}x)=\pi_{-1}x\}.
\]

\item there exists $f:\mathcal{BD}(G)\to\mathcal{BD}(G)$ with $|f|_{\mathrm{Lip}}\leq1$
such that 
\[
\dom(C)=\{(u,v)\in\dom(A)\,;\,f(\frac{1}{2}(u_{\mathcal{BD}}+Dv_{\mathcal{BD}}))=\frac{1}{2}(u_{\mathcal{BD}}-Dv_{\mathcal{BD}})\}.
\]

\item there exists an $m$-accretive relation $M\subseteq\mathcal{BD}(G)\times\mathcal{BD}(G)$
such that 
\[
\dom(C)=\{(u,v)\in\dom(A)\,;\,(u_{\mathcal{BD}},Dv_{\mathcal{BD}})\in M\}.
\]

\end{enumerate}

In either case we have 
\[
f(u)=h_{1}((u,Gu))\quad(u\in\mathcal{BD}(G)),\quad M=2(1+f)^{-1}-1.
\]
\end{thm}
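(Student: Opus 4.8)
The plan is to prove the three equivalences (i)$\Leftrightarrow$(ii), (ii)$\Leftrightarrow$(iii) and (iii)$\Leftrightarrow$(iv) in turn, each by invoking exactly one of the results already prepared, and to read off the two closing formulas along the way. The equivalence (i)$\Leftrightarrow$(ii) is nothing but \prettyref{thm:main} applied to the skew-symmetric $A_{0}$ whose negative adjoint is the block operator $A$ of \prettyref{eq:block_op}; the contraction condition $\|h(x)-h(y)\|_{H}\leq\|x-y\|_{H}$ there is exactly $|h|_{\mathrm{Lip}}\leq1$.

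For (ii)$\Leftrightarrow$(iii) I would use \prettyref{cor:reduce_h} to pass between a contraction $h\colon\ker(1-A)\to\ker(1+A)$ and its determining map $f\colon\mathcal{BD}(G)\to\mathcal{BD}(G)$, noting that this correspondence preserves the Lipschitz seminorm. Write $x=(u,v)\in\dom(A)$ and insert the formulas of \prettyref{lem:ortho_projector_block_op}: the first coordinate of $\pi_{1}x$ is $a\coloneqq\tfrac{1}{2}(u_{\mathcal{BD}}+Dv_{\mathcal{BD}})$, so by \prettyref{cor:reduce_h} one has $h(\pi_{1}x)=(f(a),-Gf(a))$. Since $h(\pi_{1}x)$ and $\pi_{-1}x$ both lie in $\ker(1+A)=\{(c,-Gc)\,;\,c\in\mathcal{BD}(G)\}$, they agree if and only if their first coordinates agree, and the first coordinate of $\pi_{-1}x$ is $\tfrac{1}{2}(u_{\mathcal{BD}}-Dv_{\mathcal{BD}})$. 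This turns the boundary condition $h(\pi_{1}x)=\pi_{-1}x$ into precisely the condition of (iii) and simultaneously produces the formula $f(u)=h_{1}((u,Gu))$.

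Finally, (iii)$\Leftrightarrow$(iv) follows from \prettyref{lem:relation} applied on the Hilbert space $\mathcal{BD}(G)$: a contraction $f$ corresponds to the $m$-accretive relation $M=2(1+f)^{-1}-1$ via $f(a)=b\Leftrightarrow(a+b,a-b)\in M$. Setting $a\coloneqq\tfrac{1}{2}(u_{\mathcal{BD}}+Dv_{\mathcal{BD}})$ and $b\coloneqq\tfrac{1}{2}(u_{\mathcal{BD}}-Dv_{\mathcal{BD}})$ — both in $\mathcal{BD}(G)$, since $Dv_{\mathcal{BD}}=D_{\mathcal{BD}}v_{\mathcal{BD}}\in\mathcal{BD}(G)$ — one has $a+b=u_{\mathcal{BD}}$ and $a-b=Dv_{\mathcal{BD}}$, so $f(a)=b$ is equivalent to $(u_{\mathcal{BD}},Dv_{\mathcal{BD}})\in M$, which is the condition of (iv). The two displayed closing formulas are then collected directly from \prettyref{cor:reduce_h} and \prettyref{lem:relation}.

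I expect the only genuine care to be required in the bookkeeping for (ii)$\Leftrightarrow$(iii): one must justify that two elements of $\ker(1+A)$ coincide as soon as their first components do, which rests on the description $\ker(1+A)=\{(c,-Gc)\}$ read off from the definition of $A$, and one must keep straight which projection yields which half-sum. Once this is set up, the substitutions in (iii)$\Leftrightarrow$(iv) are purely algebraic and everything reduces to the cited lemmas.
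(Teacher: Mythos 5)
Your proposal is correct and follows exactly the paper's own route: (i)$\Leftrightarrow$(ii) via \prettyref{thm:main}, (ii)$\Leftrightarrow$(iii) via \prettyref{cor:reduce_h} together with \prettyref{lem:ortho_projector_block_op}, and (iii)$\Leftrightarrow$(iv) via \prettyref{lem:relation} with the substitution $a+b=u_{\mathcal{BD}}$, $a-b=Dv_{\mathcal{BD}}$. The additional bookkeeping you supply (elements of $\ker(1+A)$ being determined by their first component) is a correct elaboration of what the paper leaves implicit.
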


\begin{proof}
The equivalence of (i) and (ii) follows from \prettyref{thm:main}.
The equivalence of (ii) and (iii) follows from \prettyref{cor:reduce_h}.
For the equivalence of (iii) and (iv), we observe that 
\[
(u_{\mathcal{BD}},Dv_{\mathcal{BD}})\in M\Leftrightarrow f(\frac{1}{2}(u_{\mathcal{BD}}+Dv_{\mathcal{BD}}))=\frac{1}{2}(u_{\mathcal{BD}}-Dv_{\mathcal{BD}})
\]
and thus, the statement follows from \prettyref{lem:relation}.
\end{proof}
\begin{rem}
Note that the equivalence of (i) and (iv) in the latter theorem is
the main result of \cite{Trostorff2014}.
\end{rem}

We conclude this section with the study of linear $m$-accretive relations,
which by \prettyref{thm:char_max_mon_block} correspond to linear
$m$-accretive restrictions of the operator $A$ given in \prettyref{eq:block_op}.
We start with the following simple observation.
\begin{lem}
Let $X,Y$ be Hilbert spaces and $M\subseteq X\times Y$ be a closed
subspace. Then there exists $S\in L(X;X\times Y)$ and $T\in L(Y;X\times Y)$
such that 
\[
(u,v)\in M\Leftrightarrow Su=Tv\quad(u\in X,v\in Y).
\]
\end{lem}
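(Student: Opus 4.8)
The plan is to realise the membership condition $(u,v)\in M$ as the vanishing of a single bounded orthogonal projection and then to separate the $X$- and $Y$-contributions by linearity. Since $M$ is a closed subspace of the Hilbert space $X\times Y$, the projection theorem provides a bounded orthogonal projection $P\colon X\times Y\to X\times Y$ onto $M^{\bot}$, so that $\ker P=(M^{\bot})^{\bot}=M$ (using that $M$ is closed). Hence for all $(u,v)\in X\times Y$ we have the equivalence
\[
(u,v)\in M\Leftrightarrow P(u,v)=0.
\]

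Next I would introduce the canonical isometric embeddings $\iota_{X}\colon X\to X\times Y$, $\iota_{X}u=(u,0)$, and $\iota_{Y}\colon Y\to X\times Y$, $\iota_{Y}v=(0,v)$, which are bounded and linear. Writing $(u,v)=\iota_{X}u+\iota_{Y}v$ and using the linearity of $P$, the condition $P(u,v)=0$ becomes $P\iota_{X}u=-P\iota_{Y}v$. Setting
\[
S\coloneqq P\iota_{X}\in L(X;X\times Y),\qquad T\coloneqq-P\iota_{Y}\in L(Y;X\times Y),
\]
both operators are bounded as compositions of bounded maps, and the displayed equivalence turns precisely into $(u,v)\in M\Leftrightarrow Su=Tv$, as desired.

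There is no serious obstacle here: the only points to check are the boundedness of $S$ and $T$, which is immediate since $P,\iota_{X},\iota_{Y}$ are all bounded, and the reformulation of $P(u,v)=0$, which is pure linearity together with $(u,v)=\iota_{X}u+\iota_{Y}v$. The essential input is the existence of the orthogonal projection $P$ with $\ker P=M$, which is exactly where the closedness hypothesis on $M$ is used; without it, $M^{\bot\bot}$ would only recover the closure of $M$.
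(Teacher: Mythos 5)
Your proof is correct and is essentially identical to the paper's own argument: both take $S=P_{M^{\bot}}\iota_{X}$ and $T=-P_{M^{\bot}}\iota_{Y}$ for the orthogonal projection onto $M^{\bot}$ and the canonical embeddings, using closedness of $M$ exactly where you do, namely to get $\ker P_{M^{\bot}}=(M^{\bot})^{\bot}=M$. Nothing to add.
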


\begin{proof}
Since $M$ is a closed subspace of the Hilbert space $X\times Y$,
we have $(u,v)\in M$ if and only if $P_{M^{\bot}}(u,v)=0$ for each
$u\in X,v\in Y.$ Hence, we may set $S\coloneqq P_{M^{\bot}}\iota_{X}$
and $T\coloneqq-P_{M^{\bot}}\iota_{Y},$ where $\iota_{X}$ and $\iota_{Y}$
denote the canonical embeddings of $X$ and $Y$ in $X\times Y$,
respectively. Then 
\[
Su=Tv\Leftrightarrow P_{M^{\bot}}(\iota_{x}u+\iota_{Y}v)=0\Leftrightarrow(u,v)\in M.\tag*{\qedhere}
\]
\end{proof}
Since linear $m$-accretive relations $M$ on a Hilbert space $X$
are closed (see e.g. \prettyref{prop:accretive_dense_range}), $m$-accretive
relations can be described by operator equalities as in the lemma
above. The natural question which arises is: when is a relation, which
is determined by two operators as above, $m$-accretive?
\begin{prop}
Let $X$ be a Hilbert space and $Y$ a normed space. Let $S,T\in L(X,Y)$
and consider the relation 
\[
M\coloneqq\{(u,v)\in X\times X\,;\,Su=Tv\}.
\]
Then $M$ is $m$-accretive, if and only if the following three properties
are satisfied
\begin{itemize}
\item $\ran(T-S)\subseteq\ran(T+S),$
\item $T+S$ is one-to-one,
\item $\|(S+T)^{-1}(T-S)\|\leq1.$
\end{itemize}
\end{prop}

\begin{proof}
By \prettyref{lem:relation} we know that $M$ is maximal monotone
if and only if $f\coloneqq\left(\frac{1}{2}(M+1)\right)^{-1}-1$ defines
a Lipschitz-continuous mapping on $X$ with $|f|_{\mathrm{Lip}}\leq1.$
Since $M$ is linear, we need to show that $f\in L(X)$ with $\|f\|\leq1.$
We recall from \prettyref{lem:relation} that 
\[
(u,v)\in f\Leftrightarrow(u+v,u-v)\in M\Leftrightarrow S(u+v)=T(u-v)\Leftrightarrow(S+T)v=(T-S)u.
\]
Then $\dom(f)=X$ is equivalent to $\ran(T-S)\subseteq\ran(S+T)$.
Moreover, $f$ is a mapping if and only if $(0,v)\in f$ implies $v=0.$
The latter is equivalent to the injectivity of $S+T.$ Finally, the
condition $\|f\|\leq1$ is equivalent to $\|(S+T)^{-1}(T-S)\|\leq1,$
which shows the assertion.
\end{proof}
If we apply the latter proposition to linear restrictions of the block
operator $A$ in \prettyref{eq:block_op} we obtain the following
characterisation.
\begin{cor}
\label{cor:impedance_coefficients}Let $C\subseteq A$ be a linear
restriction of the block operator $A$ given in \prettyref{eq:block_op}.
Then the following statements are equivalent:

\begin{enumerate}[(i)]

\item $C$ is $m$-accretive,

\item There exists a normed space $Y$ and operators $S,T\in L(\mathcal{BD}(G),Y)$
with
\begin{itemize}
\item $\ran(T-S)\subseteq\ran(T+S)$,
\item $T+S$ is one-to-one,
\item $\|(S+T)^{-1}(T-S)\|\leq1$,
\end{itemize}
such that 
\[
\dom(C)=\{(u,v)\in\dom(A)\,;\,Su_{\mathcal{BD}}=TDv_{\mathcal{BD}}\}.
\]

\end{enumerate}
\end{cor}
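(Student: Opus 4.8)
The plan is to obtain the corollary by specialising the abstract characterisation of \prettyref{thm:char_max_mon_block} to the linear case and then feeding the resulting linear relation into the two results immediately preceding this corollary. Recall that, by the equivalence of (i) and (iv) in \prettyref{thm:char_max_mon_block}, $C$ is $m$-accretive if and only if there is an $m$-accretive relation $M\subseteq\mathcal{BD}(G)\times\mathcal{BD}(G)$ with
\[
\dom(C)=\{(u,v)\in\dom(A)\,;\,(u_{\mathcal{BD}},Dv_{\mathcal{BD}})\in M\}.
\]
Thus the whole task reduces to converting the $m$-accretivity of the \emph{linear} relation $M$ into the three stated conditions on a pair of bounded operators, which is exactly the content of the preceding Lemma (representation of a closed subspace by two operators) and the preceding Proposition (accretivity criterion for such a representation).

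For the implication (i) $\Rightarrow$ (ii) the first and only delicate step is to check that the relation $M$ produced by \prettyref{thm:char_max_mon_block} is linear. I would argue this directly: since every $a\in\mathcal{BD}(G)$ satisfies $a_{\mathcal{BD}}=a$ and since $D_{\mathcal{BD}}\colon\mathcal{BD}(D)\to\mathcal{BD}(G)$ is unitary, hence onto, the characterisation of $\dom(C)$ shows that $M$ is precisely the image $\{(u_{\mathcal{BD}},Dv_{\mathcal{BD}})\,;\,(u,v)\in\dom(C)\}$ of $\dom(C)$ under the linear map $(u,v)\mapsto(u_{\mathcal{BD}},Dv_{\mathcal{BD}})$. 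As $C$ is linear, $\dom(C)$ is a subspace, so $M$ is a linear subspace; being moreover $m$-accretive it is closed by \prettyref{prop:accretive_dense_range}, and therefore a closed subspace of the Hilbert space $\mathcal{BD}(G)\times\mathcal{BD}(G)$. The preceding Lemma then furnishes operators $S,T\in L(\mathcal{BD}(G);\mathcal{BD}(G)\times\mathcal{BD}(G))$ with $(a,b)\in M\Leftrightarrow Sa=Tb$; taking $Y\coloneqq\mathcal{BD}(G)\times\mathcal{BD}(G)$ and applying the preceding Proposition to the $m$-accretive $M$ yields exactly the three conditions $\ran(T-S)\subseteq\ran(T+S)$, injectivity of $T+S$, and $\|(S+T)^{-1}(T-S)\|\leq1$. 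Substituting $a=u_{\mathcal{BD}}$ and $b=Dv_{\mathcal{BD}}$ rewrites the domain condition in the asserted form $Su_{\mathcal{BD}}=TDv_{\mathcal{BD}}$.

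For (ii) $\Rightarrow$ (i) I would simply reverse this chain: given $S,T\in L(\mathcal{BD}(G),Y)$ satisfying the three conditions, set $M\coloneqq\{(a,b)\in\mathcal{BD}(G)\times\mathcal{BD}(G)\,;\,Sa=Tb\}$, which is $m$-accretive by the preceding Proposition. The prescribed boundary relation $Su_{\mathcal{BD}}=TDv_{\mathcal{BD}}$ is by construction equivalent to $(u_{\mathcal{BD}},Dv_{\mathcal{BD}})\in M$, so the domain of $C$ has exactly the form required in (iv) of \prettyref{thm:char_max_mon_block}, whence $C$ is $m$-accretive.

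I expect the only genuine obstacle to be the linearity of $M$ in the first implication, i.e. the assertion that the abstract relation attached to a linear restriction of $A$ is itself a linear closed subspace; the image description above (or, alternatively, the linearity correspondence recorded in the final Proposition of Section~3 together with the explicit formula $M=2(1+f)^{-1}-1$) settles this. Once $M$ is known to be a closed subspace, the operator representation and the accretivity conditions are read off verbatim from the two preceding results, and the remainder is purely bookkeeping inherited from \prettyref{thm:char_max_mon_block}.
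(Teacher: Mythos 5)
Your proof is correct and takes essentially the same route the paper intends: the paper states this corollary without a written proof, as a direct application of \prettyref{thm:char_max_mon_block} (i)$\Leftrightarrow$(iv) combined with the two immediately preceding results, which is precisely your chain of reasoning in both directions. Your explicit verification that the relation $M$ attached to a linear $C$ is a closed linear subspace --- via the image description $M=\{(u_{\mathcal{BD}},Dv_{\mathcal{BD}})\,;\,(u,v)\in\dom(C)\}$ and closedness from \prettyref{prop:accretive_dense_range} --- correctly fills in the one detail the paper only asserts in passing before the lemma.
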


\section{Application to the Wave Equation with Impedance Boundary Conditions }

We consider the wave equation on a open set $\Omega\subseteq\R^{n}$
with impedance type conditions. Employing the framework of evolutionary
equations, we may write the equation as a system of the form 
\[
\left(\partial_{t}M(\partial_{t})+A\right)U=F,
\]
where $\partial_{t}$ stands for the temporal derivative and $M$
is a material law operator, incorporating all physical coefficients.
The operator $A$ is a suitable restriction of the block operator
matrix 
\[
\left(\begin{array}{cc}
0 & \dive\\
\grad & 0
\end{array}\right),
\]
where $\grad$ and $\dive$ are defined as in the previous section,
that is, 
\begin{align*}
\grad & :H^{1}(\Omega)\subseteq L_{2}(\Omega)\to L_{2}(\Omega)^{n},\quad f\mapsto(\partial_{j}f)_{j\in\{1,\ldots,n\}},\\
\dive & :H(\dive,\Omega)\subseteq L_{2}(\Omega)^{n}\to L_{2}(\Omega),\quad\Phi\mapsto\sum_{i=1}^{n}\partial_{i}\Phi_{i},
\end{align*}
where 
\[
H(\dive,\Omega)\coloneqq\{\Phi\in L_{2}(\Omega)^{n}\,;\,\sum_{i=1}^{n}\partial_{i}\Phi_{i}\in L_{2}(\Omega)\}
\]
and the derivatives are computed in the distributional sense. We recall
that in case of bounded set $\Omega$ with Lipschitz-boundary $\Gamma,$
we may define the trace operator 
\begin{align*}
\gamma_{0}\colon H^{1}(\Omega) & \to H^{1/2}(\Gamma),\quad f\mapsto f|_{\Gamma},
\end{align*}
which is a surjective bounded operator, see e.g. \cite[Chapter 1, Theorem 1.2]{Necas2012}.
Moreover, $\ker(\gamma_{0})=H_{0}^{1}(\Omega)$ and so, we may restrict
$\gamma_{0}$ to the space $\mathcal{BD}(\grad)\coloneqq H_{0}^{1}(\Omega)^{\bot_{H^{1}(\Omega)}}$
and obtain an isomorphism between the spaces $\mathcal{BD}(\grad)$
and $H^{1/2}(\Gamma).$ Hence, we can equip $H^{1/2}(\Gamma)$ with
the equivalent norm
\[
\|\gamma_{0}f\|_{H^{1/2}(\Gamma)}\coloneqq\|f\|_{H^{1}(\Omega)}\quad(f\in\mathcal{BD}(\grad))
\]
such that $\gamma_{0}\colon\mathcal{BD}(\grad)\to H^{1/2}(\Gamma)$
becomes unitary. In a similar way, we may define a trace operator
on $H(\dive,\Omega)$ by setting 
\[
\gamma_{\mathrm{n}}\colon H(\dive,\Omega)\to H^{-1/2}(\Gamma),
\]
where $H^{-1/2}(\Gamma)$ denotes the dual space of $H^{1/2}(\Gamma)$
and 
\begin{equation}
\langle\gamma_{\mathrm{n}}\Phi,\gamma_{0}f\rangle\coloneqq\langle\Phi,\grad f\rangle_{L_{2}(\Omega)^{n}}+\langle\dive\Phi,f\rangle_{L_{2}(\Omega)}\quad(\Phi\in H(\dive,\Omega),f\in H^{1}(\Omega)).\label{eq:Green}
\end{equation}
For $f\in C^{1}(\Omega)\cap H^{1}(\Omega)$ and $\Phi\in C^{1}(\Omega)^{n}\cap H(\dive,\Omega)$
one obtains $\gamma_{0}f=f|_{\Gamma}$ and $\gamma_{\mathrm{n}}\Phi=\Phi|_{\Gamma}\cdot n,$
where $n$ denotes the unit outward directed normal on $\Gamma.$
The definition of $\gamma_{\mathrm{n}}$ gives 
\begin{align*}
\ker(\gamma_{\mathrm{n}}) & =\{\Phi\in H(\dive,\Omega)\,;\,\forall f\in H^{1}(\Omega):\,\langle\Phi,\grad f\rangle=-\langle\dive\Phi,f\rangle_{L_{2}(\Omega)}\}\\
 & =\dom(\dive_{0}),
\end{align*}
where $\dive_{0}$ is the closure of of $\dive|_{C_{c}^{\infty}(\Omega)^{n}}.$
Hence, we may restrict $\gamma_{\mathrm{n}}$ to $\dom(\dive_{0})^{\bot_{H(\dive,\Omega)}}=\mathcal{BD}(\dive)$
and obtain an isomorphism from $\mathcal{BD}(\dive)$ to $H^{-1/2}(\Gamma).$
Then \prettyref{eq:Green} reads as 
\begin{align*}
\langle\gamma_{\mathrm{n}}\Phi,\gamma_{0}f\rangle & =\langle\Phi,\grad f\rangle_{L_{2}(\Omega)^{n}}+\langle\dive\Phi,f\rangle_{L_{2}(\Omega)}\\
 & =\langle\Phi,\grad f\rangle_{L_{2}(\Omega)^{n}}+\langle\dive\Phi,\dive\grad f\rangle_{L_{2}(\Omega)}\\
 & =\langle\Phi,\grad_{\mathcal{BD}}f\rangle_{\mathcal{BD}(\dive)}\\
 & =\langle\dive_{\mathcal{BD}}\Phi,f\rangle_{\mathcal{BD}(\grad)}\quad(f\in\mathcal{BD}(\grad),\Phi\in\mathcal{BD}(\dive)).
\end{align*}
In particular, keeping in mind that we have renormed $H^{1/2}(\Gamma)$,
we have 
\[
\|\gamma_{\mathrm{n}}\Phi\|=\|\dive_{\mathcal{BD}}\Phi\|_{\mathcal{BD}(\grad)}=\|\Phi\|_{\mathcal{BD}(\dive)}\quad(\Phi\in\mathcal{BD}(\dive))
\]
and hence, $\gamma_{\mathrm{n}}$ is unitary from $\mathcal{BD}(\dive)$
to $H^{-1/2}(\Gamma)$. In order to formulate boundary conditions
of impedance type for the operator $\left(\begin{array}{cc}
0 & \dive\\
\grad & 0
\end{array}\right)$ one needs to compare the traces $\gamma_{0}f$ and $\gamma_{\mathrm{n}}\Phi$
for $f\in\mathcal{BD}(\grad)$ and $\Phi\in\mathcal{BD}(\dive).$
We consider the following type of boundary conditions 
\[
Sg=Th,\quad(g\in H^{1/2}(\Gamma),h\in H^{-1/2}(\Gamma))
\]
where $S\in L(H^{1/2}(\Gamma),Y)$ and $T\in L(H^{-1/2}(\Gamma),Y)$
for some normed space $Y$. Then a direct consequence of \prettyref{cor:impedance_coefficients}
is the following result.
\begin{prop}
Let $\Omega$ be a bounded Lipschitz domain. Let $S\in L(H^{1/2}(\Gamma),X)$
and $T\in L(H^{-1/2}(\Gamma),X)$ for some normed space $X$. Consider
$C\subseteq\left(\begin{array}{cc}
0 & \dive\\
\grad & 0
\end{array}\right)$ with 
\[
\dom(C)\coloneqq\left\{ (f,\Phi)\in H^{1}(\Omega)\times H(\dive,\Omega)\,;\,S\gamma_{0}f=T\gamma_{\mathrm{n}}\Phi\right\} .
\]
Then $C$ is $m$-accretive if and only if 
\begin{itemize}
\item $S\gamma_{0}+T\gamma_{\mathrm{n}}\grad_{\mathcal{BD}}$ is one-to-one
on $\mathcal{BD}(\grad),$
\item $\ran(T\gamma_{\mathrm{n}}\grad_{\mathcal{BD}}-S\gamma_{0})\subseteq\ran(T\gamma_{\mathrm{n}}\grad_{\mathcal{BD}}+S\gamma_{0}),$
\item $\|(T\gamma_{\mathrm{n}}\grad_{\mathcal{BD}}+S\gamma_{0})^{-1}(T\gamma_{\mathrm{n}}\grad_{\mathcal{BD}}-S\gamma_{0})\|\leq1.$
\end{itemize}
\end{prop}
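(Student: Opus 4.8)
The plan is to apply Corollary~\ref{cor:impedance_coefficients} directly, with the identification $\mathcal{BD}(\grad)\cong H^{1/2}(\Gamma)$ and $\mathcal{BD}(\dive)\cong H^{-1/2}(\Gamma)$ furnished by the unitary trace maps $\gamma_0$ and $\gamma_{\mathrm{n}}$. The corollary characterises $m$-accretivity of a linear restriction $C\subseteq A$ in terms of two operators $\tilde S,\tilde T\in L(\mathcal{BD}(\grad),X)$ satisfying the three listed conditions, with the boundary condition written as $\tilde S u_{\mathcal{BD}}=\tilde T Dv_{\mathcal{BD}}$. Here $D=\dive$, so $Dv_{\mathcal{BD}}=\dive_{\mathcal{BD}}\Phi$, and the goal is to rewrite the impedance condition $S\gamma_0 f=T\gamma_{\mathrm{n}}\Phi$ in exactly this form and then read off which concrete operators play the role of $\tilde S$ and $\tilde T$.

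First I would record the translation dictionary. For $(f,\Phi)\in\dom(A)$ we have $f_{\mathcal{BD}}=u_{\mathcal{BD}}$ and $\Phi_{\mathcal{BD}}=v_{\mathcal{BD}}$, and by the unitarity established above, $\gamma_0 f=\gamma_0 f_{\mathcal{BD}}$ and $\gamma_{\mathrm{n}}\Phi=\gamma_{\mathrm{n}}\Phi_{\mathcal{BD}}$, since the components in $\dom(\grad_0)$ and $\dom(\dive_0)$ lie in the kernels of the respective trace maps. Thus the impedance condition becomes $S\gamma_0 f_{\mathcal{BD}}=T\gamma_{\mathrm{n}}\Phi_{\mathcal{BD}}$. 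Using the Green-type identity $\gamma_{\mathrm{n}}\Phi_{\mathcal{BD}}=\gamma_{\mathrm{n}}\dive_{\mathcal{BD}}^{\ast\ast}\cdots$ — more precisely the relation $\dive_{\mathcal{BD}}\Phi_{\mathcal{BD}}=\grad_{\mathcal{BD}}^{\ast}\Phi_{\mathcal{BD}}$ together with $\gamma_{\mathrm{n}}$ being unitary on $\mathcal{BD}(\dive)$ — I would express $\gamma_{\mathrm{n}}\Phi_{\mathcal{BD}}$ in terms of $\dive_{\mathcal{BD}}\Phi_{\mathcal{BD}}$, so that the right-hand side reads $T\gamma_{\mathrm{n}}\dots$ and I can match it against $\tilde T\,\dive_{\mathcal{BD}}\Phi_{\mathcal{BD}}$. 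The natural choice is $\tilde S\coloneqq S\gamma_0$ and $\tilde T\coloneqq T\gamma_{\mathrm{n}}\grad_{\mathcal{BD}}$, the composition with $\grad_{\mathcal{BD}}$ arising precisely because Corollary~\ref{cor:impedance_coefficients} pairs $u_{\mathcal{BD}}$ with $Dv_{\mathcal{BD}}$, i.e.\ it wants both operators acting on the same space $\mathcal{BD}(\grad)$; since $\gamma_{\mathrm{n}}$ lives on $\mathcal{BD}(\dive)$ one pulls it back via the unitary $\grad_{\mathcal{BD}}\colon\mathcal{BD}(\grad)\to\mathcal{BD}(\dive)$.

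Once this substitution is made, the three abstract conditions of Corollary~\ref{cor:impedance_coefficients} — injectivity of $\tilde T+\tilde S$, the range inclusion $\ran(\tilde T-\tilde S)\subseteq\ran(\tilde T+\tilde S)$, and the contraction estimate $\|(\tilde S+\tilde T)^{-1}(\tilde T-\tilde S)\|\le1$ — transcribe verbatim into the three bullet-point conditions of the proposition, with $\tilde S=S\gamma_0$ and $\tilde T=T\gamma_{\mathrm{n}}\grad_{\mathcal{BD}}$. I would then simply state that the proof is complete by invoking the corollary.

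\emph{The main obstacle} I anticipate is bookkeeping rather than substance: one must verify carefully that $\gamma_{\mathrm{n}}\Phi_{\mathcal{BD}}$ and $\grad_{\mathcal{BD}}$ compose correctly so that $T\gamma_{\mathrm{n}}\Phi_{\mathcal{BD}}=\tilde T\,\dive_{\mathcal{BD}}\Phi_{\mathcal{BD}}$, i.e.\ that applying $\grad_{\mathcal{BD}}$ to $\dive_{\mathcal{BD}}\Phi_{\mathcal{BD}}$ returns $\Phi_{\mathcal{BD}}$ — which holds because $\grad_{\mathcal{BD}}$ and $\dive_{\mathcal{BD}}$ are mutually inverse unitaries by the defining properties of $\mathcal{BD}(\grad)$ and $\mathcal{BD}(\dive)$. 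Subject to that identification the result is immediate, so the real content is confirming the renorming makes all the trace maps unitary (already done in the preceding discussion) and that the operators $S\gamma_0$, $T\gamma_{\mathrm{n}}\grad_{\mathcal{BD}}$ indeed land in $L(\mathcal{BD}(\grad),X)$.
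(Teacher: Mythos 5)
Your proposal is correct and is essentially the paper's own proof: both arguments rewrite the boundary condition as $S\gamma_{0}f_{\mathcal{BD}}=T\gamma_{\mathrm{n}}\grad_{\mathcal{BD}}\left(\dive_{\mathcal{BD}}\Phi_{\mathcal{BD}}\right)$, using $\ker(\gamma_{0})=H_{0}^{1}(\Omega)$, $\ker(\gamma_{\mathrm{n}})=\dom(\dive_{0})$ and the fact that $\grad_{\mathcal{BD}}$ and $\dive_{\mathcal{BD}}$ are mutually inverse unitaries, and then apply Corollary~\ref{cor:impedance_coefficients} with $G=\grad$, $D=\dive$ and the operators $S\gamma_{0}$ and $T\gamma_{\mathrm{n}}\grad_{\mathcal{BD}}$ in $L(\mathcal{BD}(\grad),X)$. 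The identification you flag as the ``main obstacle'' is precisely the one-line substitution the paper performs, so nothing further is needed.
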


\begin{proof}
Note that since $\ker(\gamma_{0})=H_{0}^{1}(\Omega)$ and $\ker(\gamma_{\mathrm{n}})=\dom(\dive_{0})$
we have 
\begin{align*}
\dom(C) & \coloneqq\left\{ (f,\Phi)\in H^{1}(\Omega)\times H(\dive,\Omega)\,;\,S\gamma_{0}f_{\mathcal{BD}}=T\gamma_{\mathrm{n}}\Phi_{\mathcal{BD}}\right\} \\
 & =\left\{ (f,\Phi)\in H^{1}(\Omega)\times H(\dive,\Omega)\,;\,S\gamma_{0}f_{\mathcal{BD}}=T\gamma_{\mathrm{n}}\dive_{\mathcal{BD}}\grad_{\mathcal{BD}}\Phi_{\mathcal{BD}}\right\} 
\end{align*}
and now the assertion follows from \prettyref{cor:impedance_coefficients}
applied to $G=\grad$ and $D=\dive.$
\end{proof}
Typically, the trace operators are compared via the pivot space $L_{2}(\Gamma).$
Indeed, one can show that the embedding
\[
\iota\colon H^{1/2}(\Gamma)\to L_{2}(\Gamma),\quad g\mapsto g
\]
is continuous, injective and has dense range, see \cite[Chapter 2, Theorem 4.9]{Necas2012}.
Consequently, the dual operator 
\[
\iota'\colon L_{2}(\Gamma)\to H^{-1/2}(\Gamma)
\]
is also continuous injective and has dense range (here we identified
$L_{2}(\Gamma)'$ with $L_{2}(\Gamma)$ by the usual Riesz isomorphism).
Impedance type boundary conditions then typically take the form 
\[
K\iota\gamma_{0}f=\gamma_{\mathrm{n}}\Phi,
\]
for an operator $K\in L(L_{2}(\Gamma))$. 
\begin{lem}
\label{lem:char_pivot}Let $\Omega$ be a bounded Lipschitz domain.
Let $f\in H^{1}(\Omega),$ and $\Phi\in H(\dive,\Omega)$ and $K\in L(L_{2}(\Gamma)).$
Then $K\iota\gamma_{0}f=\gamma_{\mathrm{n}}\Phi$ if and only if 
\[
\dive_{\mathcal{BD}}\Phi_{\mathcal{BD}}=\kappa^{\ast}K\kappa f_{\mathcal{BD}},
\]
where $\kappa\coloneqq\iota\gamma_{0}\colon\mathcal{BD}(\grad)\to L_{2}(\Gamma).$
\end{lem}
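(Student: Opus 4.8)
The plan is to read the boundary condition as an identity in $H^{-1/2}(\Gamma)$ and to test it against the range of $\gamma_{0}$, exploiting that $\gamma_{0}\colon\mathcal{BD}(\grad)\to H^{1/2}(\Gamma)$ is unitary and that the Green-type identity derived above converts $\gamma_{\mathrm{n}}$-pairings into inner products on $\mathcal{BD}(\grad)$.

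First I would make two preliminary reductions. Since $K\iota\gamma_{0}f\in L_{2}(\Gamma)$ whereas $\gamma_{\mathrm{n}}\Phi\in H^{-1/2}(\Gamma)$, the equation $K\iota\gamma_{0}f=\gamma_{\mathrm{n}}\Phi$ is to be understood in $H^{-1/2}(\Gamma)$ via the canonical embedding $\iota'\colon L_{2}(\Gamma)\to H^{-1/2}(\Gamma)$; that is, it reads $\iota'K\iota\gamma_{0}f=\gamma_{\mathrm{n}}\Phi$. Moreover, since $\ker(\gamma_{0})=H_{0}^{1}(\Omega)$ and $\ker(\gamma_{\mathrm{n}})=\dom(\dive_{0})$, while $\mathcal{BD}(\grad)$ and $\mathcal{BD}(\dive)$ are the respective orthogonal complements, one has $\gamma_{0}f=\gamma_{0}f_{\mathcal{BD}}$ and $\gamma_{\mathrm{n}}\Phi=\gamma_{\mathrm{n}}\Phi_{\mathcal{BD}}$. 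Thus both sides already depend only on the boundary-data parts, consistent with the asserted identity.

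Next, using the surjectivity of $\gamma_{0}$ onto $H^{1/2}(\Gamma)$ and the non-degeneracy of the duality pairing, the equality $\iota'K\iota\gamma_{0}f_{\mathcal{BD}}=\gamma_{\mathrm{n}}\Phi_{\mathcal{BD}}$ in $H^{-1/2}(\Gamma)$ is equivalent to the equality of the dual pairings with $\gamma_{0}\psi$ for all $\psi\in\mathcal{BD}(\grad)$. On the left-hand side I would invoke the defining property of the dual operator, $\langle\iota'w,\gamma_{0}\psi\rangle=\langle w,\iota\gamma_{0}\psi\rangle_{L_{2}(\Gamma)}$ for $w\in L_{2}(\Gamma)$, together with $\iota\gamma_{0}=\kappa$ and the definition of the Hilbert-space adjoint $\kappa^{\ast}$, to rewrite it as $\langle\kappa^{\ast}K\kappa f_{\mathcal{BD}},\psi\rangle_{\mathcal{BD}(\grad)}$. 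On the right-hand side I would apply the identity $\langle\gamma_{\mathrm{n}}\Phi,\gamma_{0}\psi\rangle=\langle\dive_{\mathcal{BD}}\Phi_{\mathcal{BD}},\psi\rangle_{\mathcal{BD}(\grad)}$ established above. Equating and letting $\psi$ range over all of $\mathcal{BD}(\grad)$ then yields $\kappa^{\ast}K\kappa f_{\mathcal{BD}}=\dive_{\mathcal{BD}}\Phi_{\mathcal{BD}}$; as every step is an equivalence, this gives the claimed characterisation in both directions.

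I expect the only genuine subtlety to be the bookkeeping of the several identifications in play: distinguishing the $H^{-1/2}(\Gamma)$--$H^{1/2}(\Gamma)$ duality pairing from the $L_{2}(\Gamma)$ inner product, the use of the Riesz identification $L_{2}(\Gamma)'\cong L_{2}(\Gamma)$ implicit in $\iota'$, and the renorming of $H^{1/2}(\Gamma)$ under which $\gamma_{0}$ is unitary. Once these conventions are pinned down, the argument is a direct chain of adjoint manipulations with no analytic difficulty.
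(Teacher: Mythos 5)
Your proposal is correct and follows essentially the same route as the paper's proof: interpreting the boundary condition via $\iota'$, reducing to the $\mathcal{BD}$ parts, testing against $\gamma_{0}\psi$ for $\psi\in\mathcal{BD}(\grad)$, and converting both sides with the Green-type identity and the adjoint $\kappa^{\ast}$. The only cosmetic difference is that you organise the argument as a single chain of equivalences, whereas the paper writes out the two implications separately.
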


\begin{proof}
Assume first that $K\gamma_{0}f=\gamma_{\mathrm{n}}\Phi;$ that is,
for each $g\in H^{1/2}(\Gamma)$ we have 
\[
\langle\gamma_{n}\Phi,g\rangle=\langle K\iota\gamma_{0}f,\iota g\rangle_{L_{2}(\Gamma)}.
\]
The we compute for each $v\in\mathcal{BD}(\grad)$
\begin{align*}
\langle\dive_{\mathcal{BD}}\Phi_{\mathcal{BD}},v\rangle_{\mathcal{BD}(\grad)} & =\langle\gamma_{\mathrm{n}}\Phi,\gamma_{0}v\rangle\\
 & =\langle K\iota\gamma_{0}f,\iota\gamma_{0}v\rangle_{L_{2}(\Gamma)}\\
 & =\langle\kappa^{\ast}K\kappa f_{\mathcal{BD}},v\rangle_{\mathcal{BD}(\grad)},
\end{align*}
where we have used $\gamma_{0}f=\gamma_{0}f_{\mathcal{BD}}$ and $\gamma_{\mathrm{n}}\Phi=\gamma_{\mathrm{n}}\Phi_{\mathcal{BD}}$.
The latter gives $\dive_{\mathcal{BD}}\Phi_{\mathcal{BD}}=\kappa^{\ast}K\kappa f_{\mathcal{BD}}.$
Assume now conversely, that $\dive_{\mathcal{BD}}\Phi_{\mathcal{BD}}=\kappa^{\ast}K\kappa f_{\mathcal{BD}}$
and let $g\in H^{1/2}(\Gamma).$ Then we find $v\in\mathcal{BD}(\grad)$
such that $g=\gamma_{0}v.$ Hence, 
\begin{align*}
\langle\gamma_{\mathrm{n}}\Phi,g\rangle & =\langle\gamma_{\mathrm{n}}\Phi,\gamma_{0}v\rangle\\
 & =\langle\dive_{\mathcal{BD}}\Phi_{\mathcal{BD}},v\rangle_{\mathcal{BD}(\grad)}\\
 & =\langle\kappa^{\ast}K\kappa f_{\mathcal{BD}},v\rangle_{\mathcal{BD}(\grad)}\\
 & =\langle K\kappa f_{\mathcal{BD}},\kappa v\rangle_{L_{2}(\Gamma)}\\
 & =\langle K\iota\gamma_{0}f,\iota g\rangle_{L_{2}(\Gamma)}
\end{align*}
 and hence, $\gamma_{\mathrm{n}}\Phi=K\iota\gamma_{0}f.$ 
\end{proof}
\begin{prop}
Let $\Omega$ be a bounded Lipschitz domain. Let $K\in L(L_{2}(\Gamma))$.
Consider $C\subseteq\left(\begin{array}{cc}
0 & \dive\\
\grad & 0
\end{array}\right)$ with 
\[
\dom(C)\coloneqq\left\{ (f,\Phi)\in H^{1}(\Omega)\times H(\dive,\Omega)\,;\,K\iota\gamma_{0}f=\gamma_{\mathrm{n}}\Phi\right\} .
\]
Then $C$ is $m$-accretive if and only if $K$ is accretive.
\end{prop}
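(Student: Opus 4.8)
The statement to prove is the final Proposition: with the boundary condition $K\iota\gamma_0 f = \gamma_{\mathrm n}\Phi$, the restriction $C$ is $m$-accretive if and only if $K \in L(L_2(\Gamma))$ is accretive. The plan is to reduce this to the already-established linear characterisation in \prettyref{cor:impedance_coefficients} by rewriting the boundary condition in the abstract boundary data form $Su_{\mathcal{BD}} = TDv_{\mathcal{BD}}$ and then checking the three conditions of that corollary. The crucial preparatory step is \prettyref{lem:char_pivot}, which translates the pivot-space condition $K\iota\gamma_0 f = \gamma_{\mathrm n}\Phi$ into the intrinsic condition $\dive_{\mathcal{BD}}\Phi_{\mathcal{BD}} = \kappa^\ast K\kappa f_{\mathcal{BD}}$, where $\kappa = \iota\gamma_0\colon\mathcal{BD}(\grad)\to L_2(\Gamma)$.

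\textbf{Setting up the operators $S$ and $T$.} Using \prettyref{lem:char_pivot}, I would write the boundary condition as $\dive_{\mathcal{BD}}\Phi_{\mathcal{BD}} = \kappa^\ast K\kappa f_{\mathcal{BD}}$, i.e.\ in the form $Sf_{\mathcal{BD}} = T\dive_{\mathcal{BD}}\Phi_{\mathcal{BD}}$ with $S \coloneqq \kappa^\ast K\kappa$ and $T \coloneqq 1$ on $\mathcal{BD}(\grad)$. (Here I am identifying $\dive_{\mathcal{BD}}\Phi_{\mathcal{BD}}$ with $Dv_{\mathcal{BD}}$ in the notation of \prettyref{cor:impedance_coefficients}, since $D = \dive$, $G = \grad$ and $\dive_{\mathcal{BD}}$ is precisely the unitary $D_{\mathcal{BD}}$.) Then I would invoke \prettyref{cor:impedance_coefficients} and verify its three bullet points for this specific choice $T = 1$, $S = \kappa^\ast K\kappa$. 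The condition $\ran(T-S)\subseteq\ran(T+S)$ and the injectivity of $T+S$ both concern the operator $1 \pm \kappa^\ast K\kappa$, and the contractivity condition becomes $\|(1+\kappa^\ast K\kappa)^{-1}(1-\kappa^\ast K\kappa)\|\le 1$.

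\textbf{The algebraic core.} The heart of the argument is to show that these three conditions hold \emph{simultaneously} precisely when $K$ is accretive. Writing $R \coloneqq \kappa^\ast K\kappa$, one wants: $R$ accretive $\iff$ ($1+R$ invertible with $\ran(1-R)\subseteq\ran(1+R)$ and $\|(1+R)^{-1}(1-R)\|\le1$). This is the operator-theoretic statement that $R$ is accretive iff its Cayley transform $(1+R)^{-1}(1-R)$ is a well-defined contraction. I would prove this by the standard parallelogram-type computation: for $w = (1+R)x$ one has $\|(1-R)x\|^2 - \|(1+R)x\|^2 = -4\Re\langle Rx,x\rangle$, so the contractivity of the Cayley transform is equivalent to $\Re\langle Rx,x\rangle\ge0$ for all $x$, which also forces $1+R$ to be bounded below hence injective with the required range inclusion. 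It then remains to transport accretivity between $R = \kappa^\ast K\kappa$ and $K$ itself: since $\kappa\colon\mathcal{BD}(\grad)\to L_2(\Gamma)$ is injective with dense range, $\langle \kappa^\ast K\kappa\, x, x\rangle_{\mathcal{BD}(\grad)} = \langle K\kappa x,\kappa x\rangle_{L_2(\Gamma)}$ shows $R$ is accretive iff $\langle Kg,g\rangle_{L_2(\Gamma)}\ge0$ on the dense set $\ran\kappa$, which by continuity of $K$ is equivalent to accretivity of $K$ on all of $L_2(\Gamma)$.

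\textbf{Main obstacle.} The step I expect to require the most care is not the Cayley-transform equivalence (which is routine) but the reduction showing that $\mathcal{BD}(\dive)$ is correctly parametrised so that $T$ really is the identity: I must use that $\dive_{\mathcal{BD}}\colon\mathcal{BD}(\dive)\to\mathcal{BD}(\grad)$ is unitary, so that ranging $\Phi_{\mathcal{BD}}$ over $\mathcal{BD}(\dive)$ makes $\dive_{\mathcal{BD}}\Phi_{\mathcal{BD}}$ range over all of $\mathcal{BD}(\grad)$, matching the hypotheses of \prettyref{cor:impedance_coefficients} verbatim. The other delicate point is the density argument transferring accretivity from $\ran\kappa$ to $L_2(\Gamma)$: one needs that $K$ accretive on a dense subspace implies $K$ accretive everywhere, which follows from continuity of $g\mapsto\langle Kg,g\rangle_{L_2(\Gamma)}$ together with the dense range of $\iota$ recorded just before \prettyref{lem:char_pivot}.
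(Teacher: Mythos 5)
Your proposal is correct in substance and its outer structure coincides with the paper's proof: both open with \prettyref{lem:char_pivot} to rewrite the impedance condition as $\dive_{\mathcal{BD}}\Phi_{\mathcal{BD}}=\kappa^{\ast}K\kappa f_{\mathcal{BD}}$, and both close by transferring accretivity between $\kappa^{\ast}K\kappa$ and $K$ through the dense range of $\kappa$. The middle step is where you diverge. The paper reads the rewritten condition as saying that $\dom(C)$ is induced by the relation $M$ given by the graph of $\kappa^{\ast}K\kappa$ and invokes \prettyref{thm:char_max_mon_block}\,(iv): $C$ is $m$-accretive iff this $M$ is, and since $\kappa^{\ast}K\kappa$ is bounded and everywhere defined, $m$-accretivity of $M$ is simply accretivity of $\kappa^{\ast}K\kappa$ (a continuous, everywhere-defined accretive operator is maximal accretive, by Minty). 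You instead put the condition into the linear form $Su_{\mathcal{BD}}=TDv_{\mathcal{BD}}$ with $S=\kappa^{\ast}K\kappa$, $T=1$, invoke \prettyref{cor:impedance_coefficients}, and prove by hand that its three bullet points hold simultaneously iff $\kappa^{\ast}K\kappa$ is accretive (the Cayley-transform computation). That route is valid, but it re-derives machinery the paper has already packaged: \prettyref{lem:relation} and the proposition on $S,T$-relations say precisely that the three bullet points are equivalent to $m$-accretivity of the graph relation, so your ``algebraic core'' replaces the paper's one-line observation by a longer computation.

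Two points need tightening before your sketch is a complete proof. First, in the algebraic core, accretivity of $R=\kappa^{\ast}K\kappa$ gives $\|(1+R)x\|\geq\|x\|$, hence injectivity and closed range of $1+R$; but the inclusion $\ran(1-R)\subseteq\ran(1+R)$ does \emph{not} follow from bounded-below-ness alone, contrary to your ``hence injective with the required range inclusion''. You need surjectivity of $1+R$, which holds because $1+R^{\ast}$ is also accretive, hence injective, so $\ran(1+R)$ is dense as well as closed (alternatively, Lax--Milgram). Second, \prettyref{cor:impedance_coefficients} is an existence statement, so in the direction ``$C$ $m$-accretive $\Rightarrow$ $K$ accretive'' it only provides \emph{some} admissible pair $(S',T')$, not your specific pair $(\kappa^{\ast}K\kappa,1)$. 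To conclude that the three conditions hold for your pair, you need that $\dom(C)$ determines the boundary relation uniquely; this follows from the surjectivity of $(u,v)\mapsto(u_{\mathcal{BD}},Dv_{\mathcal{BD}})$ on $\dom(A)$ --- exactly the unitarity of $\dive_{\mathcal{BD}}$ you mention at the end --- or, more cleanly, from using \prettyref{thm:char_max_mon_block}\,(iv) together with its uniqueness clause ($M=2(1+f)^{-1}-1$ with $f$ determined by $h$, which is unique), which is in effect what the paper does. Both repairs are routine and use only material already in the paper.
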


\begin{proof}
By \prettyref{lem:char_pivot} we have 
\[
(f,\Phi)\in\dom(C)\Leftrightarrow\dive_{\mathcal{BD}}\Phi_{\mathcal{BD}}=\kappa^{\ast}K\kappa f_{\mathcal{BD}}.
\]
Moreover, by \prettyref{thm:char_max_mon_block} (iv) $C$ is $m$-accretive
if and only if $\kappa^{\ast}K\kappa$ is $m$-accretive, which is
equivalent to the accretivity of $\kappa^{\ast}K\kappa,$ since it
is a bounded operator. Finally, $\kappa^{\ast}K\kappa$ is accretive,
if and only if $K$ is accretive, since $\kappa$ has a dense range. 
\end{proof}
\begin{rem}
As the previous example shows, classical trace spaces can be incorporated
within the framework of the corresponding $\mathcal{BD}$-spaces.
Note however, that $\mathcal{BD}$-spaces can also be used, when no
traces are available, for instance if the boundary of the underlying
domain is not smooth enough. Thus, $\mathcal{BD}$-spaces provide
a unified framework to formulate abstract boundary conditions without
any restrictions on the underlying domain. 
\end{rem}

\end{document}